\def\tab(#1){\,\mbox{\tiny$\young(#1)$}\,}
\title[Large Dimensional homomorphism spaces]{Large dimensional homomorphism spaces between Weyl modules and Specht modules}
\author[S.~Lyle]{Sin\'ead Lyle}
\address{School of Mathematics, University of East Anglia, Norwich NR4 7TJ, UK.}
\email{s.lyle@uea.ac.uk}
\subjclass[2000]{20C08, 20C30, 05E10}
\keywords{Schur algebras, Weyl modules, Hecke algebras, Specht modules, Homomorphisms.}
\numberwithin{equation}{section}
\numberwithin{figure}{section}
\newtheorem{lemma}{Lemma}[section]
\newtheorem{theorem}[lemma]{Theorem}
\newtheorem{proposition}[lemma]{Proposition}
\newtheorem*{theorem*}{Theorem}
\newtheorem*{proposition*}{Proposition}
\newtheorem*{lemma*}{Lemma}
\newtheorem*{definition*}{Definition}
\newtheorem*{caution*}{Caution}
\theoremstyle{definition}
\theoremstyle{remark}
\newtheorem*{ex}{Example}
\newcommand{\h}{\mathcal{H}}
\newcommand{\sym}{\mathfrak{S}} 
\newcommand{\la}{\lambda}
\newcommand{\gauss}[2]{{{#1} \brack {#2}}}
\DeclareMathOperator{\Hom}{Hom}
\DeclareMathOperator{\EHom}{EHom}
\def\S{\mathsf{S}}
\def\T{\mathsf{T}}
\def\R{\mathsf{R}}
\def\U{\mathsf{U}}
\def\A{\mathsf{A}}
\def\B{\mathsf{B}}
\DeclareMathOperator{\rrr}{r}
\newcommand{\RowT}{\mathcal{T}_{\rrr}}
\newcommand{\rep}[3]{\begin{array}{l}{#1}\\{#2}\\{#3}\end{array}}
\begin{document}
\begin{abstract}
We give a family of pairs of Weyl modules for which the corresponding homomorphism space is at least 2-dimensional.  Using this result we show that for fixed parameters $e>0$ and $p\geq 0$ there exist arbitrarily large homomorphism spaces between pairs of Weyl modules. 
\end{abstract}
\maketitle

\section{Introduction}
Let $F$ be a field of characteristic $p \geq 0$.  Take $q \in F^\times$ with the property that $1+q+\ldots+q^{f-1}=0 \in F$ for some integer $2 \leq f < \infty$ and let $e\geq 2$ be minimal with this property.  For $n \geq 0$, we write $\h_n=\h_{F,q}(\sym_n)$ to denote the Hecke algebra of the symmetric group $\sym_n$ and $\mathcal{S}_{n}=\mathcal{S}_{F,q}(\sym_n)$ to denote the corresponding $q$-Schur algebra.  For each partition $\mu$ of $n$, we may define a $\h_n$-module $S^\mu$, known as a Specht module, and an $\mathcal{S}_n$-module $\Delta(\mu)$, known as a Weyl module.  Recall that
if $\mu$ and $\la$ are partitions of $n$ then
\[\dim(\Hom_{\h_n}(S^\mu,S^\la)) \geq \dim(\Hom_{\mathcal{S}_n}(\Delta(\mu),\Delta(\la))\]
with equality if $q \neq -1$~\cite{DJ}.  Despite much investigation, there are few known examples of Weyl modules $\Delta(\mu)$ and $\Delta(\la)$ such that 
$\dim(\Hom_{\mathcal{S}_n}(\Delta(\mu),\Delta(\la)))>1$.  The first such pairs were recently exhibited by Dodge~\cite{Dodge}.  Working in the symmetric group algebra and using results of Chuang and Tan~\cite{ChuangTan} on the radical filtrations of Specht modules belonging to Rouquier blocks, he showed that for any $k$ satisfying $k(k+1)/2 +1 <p$ there exist partitions $\mu$ and $\la$ of some integer $n$ such that $\dim(\Hom_{F\sym_n}(S^\mu,S^\la))=k$.  In particular, for $p \geq 5$ there exist Specht modules, and hence Weyl modules, such that the corresponding homomorphism space is at least 2-dimensional. Using Lemma~\ref{DimGrows} below, Dodge's result proves the following: Let $F$ be a field of characteristic $p \geq 5$.  Then given any integer $l \geq 0$ there exist partitions $\alpha$ and $\beta$ of some integer $m$ such that $\dim(\Hom_{F\sym_m}(S^\alpha,S^\beta))\geq l$.  

\begin{lemma} \label{DimGrows}
Suppose $\mu$ and $\lambda$ are partitions of an integer $n$ such that $\dim(\Hom_{\mathcal{S}_n}(\Delta(\mu),\Delta(\la)))=k$.  Then there exist partitions $\alpha$ and $\beta$ of some integer $m$ such that $\dim(\Hom_{\mathcal{S}_m}(\Delta(\alpha),\Delta(\beta)))=k^2$. 
\end{lemma}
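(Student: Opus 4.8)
The plan is to realise $k^2$ as the dimension of a $\Hom$-space between Weyl modules attached to a pair of partitions built from two ``widely separated'' copies of $\mu$, respectively of $\la$, inside one large Young diagram, and then to show that with the copies far enough apart such a homomorphism space factors as a tensor product. First dispose of the trivial case $k=0$: then $\mu$ and $\la$ have distinct $e$-cores, and any $\alpha,\beta$ lying in distinct blocks of some $\mathcal{S}_m$ satisfy $\dim\Hom_{\mathcal{S}_m}(\Delta(\alpha),\Delta(\beta))=0=k^2$. So assume $k\geq 1$; then $\mu$ and $\la$ share an $e$-core (and $\mu\trianglelefteq\la$). Pad $\mu$ and $\la$ with zeros so each has exactly $s$ parts, fix an integer $N$ divisible by $e$ with $N>\mu_1$ and $N>\la_1$, and set
\[
  \alpha=(\mu_1+N,\ldots,\mu_s+N,\mu_1,\ldots,\mu_s),\qquad
  \beta=(\la_1+N,\ldots,\la_s+N,\la_1,\ldots,\la_s),
\]
partitions of the common integer $m=2n+sN$. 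An abacus computation, using $e\mid N$ and $\mathrm{core}_e(\mu)=\mathrm{core}_e(\la)$, shows that $\alpha$ and $\beta$ have the same $e$-core, so nothing a priori forces $\Hom_{\mathcal{S}_m}(\Delta(\alpha),\Delta(\beta))$ to vanish.

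The target is the identity
\[
  \dim\Hom_{\mathcal{S}_m}(\Delta(\alpha),\Delta(\beta))
  =\bigl(\dim\Hom_{\mathcal{S}_n}(\Delta(\mu),\Delta(\la))\bigr)^2 = k^2 .
\]
The square on the right equals $\dim\Hom_{\mathcal{S}_n\otimes\mathcal{S}_n}\bigl(\Delta(\mu)\boxtimes\Delta(\mu),\,\Delta(\la)\boxtimes\Delta(\la)\bigr)$ by the Künneth formula for Hom-spaces of finite-dimensional modules over a tensor product of algebras (and $\Delta(\mu)\boxtimes\Delta(\mu)$ is itself a Weyl module for $\mathcal{S}_n\otimes\mathcal{S}_n$). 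So it suffices to produce a natural isomorphism between $\Hom_{\mathcal{S}_m}(\Delta(\alpha),\Delta(\beta))$ and this Hom-space over the product algebra.

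To do so I would pass to the (quantum) general linear picture and use the block-diagonal Levi $L\cong GL_s\times GL_s\times GL_{m-2s}$ whose first two factors each see $s$ rows of $\alpha$ and of $\beta$. A homomorphism $\Delta(\alpha)\to\Delta(\beta)$ is determined by the image of the highest-weight generator, namely a maximal vector of weight $\alpha$ in $\Delta(\beta)$. The crucial point is that, because $N$ is large (so that $\alpha,\beta$ are partitions with ``long'' first $s$ rows), the weight $\alpha$ can only occur inside the ``diagonal'' sections of a Weyl filtration of $\Delta(\beta)$ restricted to $L$: the only sections $\Delta_L(\kappa^{(1)})\boxtimes\Delta_L(\kappa^{(2)})\boxtimes\Delta_L(\kappa^{(3)})$ with nonzero weight-$\alpha$ space are those with $\kappa^{(1)}=\la+(N^s)$, $\kappa^{(2)}=\la$, $\kappa^{(3)}=\varnothing$ (this is forced by $\kappa^{(1)}\subseteq\beta$ together with $|\kappa^{(1)}|=n+sN$ and a Littlewood--Richardson count), and the $\det^{N}$ twist in the first factor is harmless since tensoring with $\det^{N}$ is an equivalence. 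Feeding this into the standard relationship between Weyl modules for $\mathcal{S}_m$ (or their contravariant duals) and parabolic induction/restriction from $L$ collapses, via Frobenius reciprocity, $\Hom_{\mathcal{S}_m}(\Delta(\alpha),\Delta(\beta))$ onto $\Hom_{\mathcal{S}_n}(\Delta(\mu),\Delta(\la))\otimes\Hom_{\mathcal{S}_n}(\Delta(\mu),\Delta(\la))$ (the trivial third factor of $L$ contributing only $\Hom(F,F)=F$, and a stability argument identifying $\Hom$'s over $GL_s$ with $\Hom$'s over $\mathcal{S}_n$), giving dimension $k^2$. An alternative is to bypass the geometry and argue combinatorially with a basis of ``semistandard'' homomorphisms between Weyl modules, checking that semistandard homomorphisms for the separated pair $(\alpha,\beta)$ biject with pairs of semistandard homomorphisms for $(\mu,\la)$.

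I expect the last step to be the main obstacle: one has to set up the parabolic-reduction / $U$-invariants machinery carefully enough — in the quantum, non-semisimple setting — to conclude that the Frobenius-reciprocity comparison is an isomorphism and not merely an inclusion, i.e.\ that every maximal weight-$\alpha$ vector of $\Delta(\beta)$ really does come from a maximal vector in the diagonal section and hence from an outer tensor product of maps. By contrast, the reduction of the case $k=0$, the check that $\alpha$ and $\beta$ lie in one block of $\mathcal{S}_m$, and the Künneth identification of the right-hand side are routine.
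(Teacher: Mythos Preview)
Your construction is essentially the one the paper uses: stack two copies of $\mu$ (respectively $\la$) one above the other, the upper copy shifted to the right far enough that the result is a partition. The paper takes the shift to be exactly $\la_1$ (no divisibility by $e$ is needed), and then simply \emph{cites} the generalised row and column removal theorem for Weyl-module homomorphisms, due to Donkin \cite[Prop.~10.4]{Donkin:tilting} and, in the Hecke/Schur setting, Lyle--Mathas \cite[Theorem~3.1]{LM:rowhoms}. That theorem says precisely that for partitions built by this kind of horizontal concatenation one has
\[
\Hom_{\mathcal{S}_m}(\Delta(\alpha),\Delta(\beta))\;\cong\;\Hom_{\mathcal{S}_n}(\Delta(\mu),\Delta(\la))\otimes\Hom_{\mathcal{S}_n}(\Delta(\mu),\Delta(\la)),
\]
so the dimension is $k^2$ on the nose. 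The whole proof in the paper is two sentences.

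What you sketch in your final paragraphs --- restrict $\Delta(\beta)$ to a Levi, identify the relevant filtration piece, and apply Frobenius reciprocity --- is exactly the mechanism behind Donkin's proof of the row/column removal theorem. So you are not on a wrong track; you are re-deriving a theorem that is already in the literature, and you correctly flag that making the Frobenius-reciprocity step an equality (not just an injection) in the quantum, non-semisimple setting is where the real work lies. Rather than carrying that out, you should invoke \cite{LM:rowhoms} or \cite{Donkin:tilting} directly. Two incidental remarks: the condition $e\mid N$ and the accompanying block discussion are superfluous (the row/column removal theorem holds for any shift making $\alpha,\beta$ partitions), and your handling of $k=0$ is fine but also unnecessary once the theorem is cited.
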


\begin{proof}
We may assume $k \geq 1$.  If $\mu=(\mu_1,\ldots,\mu_a)$ and $\la=(\la_1,\ldots,\la_b)$ then, since $\Hom_{\mathcal{S}_n}(\Delta(\mu),\Delta(\la)) \neq \{0\}$, we have $\la \unrhd \mu$ so that $a \geq b$ and $\la_1 \geq \mu_1$.  Define partitions $\alpha$ and $\beta$ by
\begin{align*}
\alpha_i & = \begin{cases}
\mu_i + \la_1, & 1 \leq i \leq a, \\
\mu_{i-a}, & a+1 \leq i \leq 2a,
\end{cases} &
\beta_i & = \begin{cases}
\la_i + \la_1, & 1 \leq i \leq a, \\
\la_{i-b}, & a+1 \leq i \leq 2a,
\end{cases} 
\end{align*}
so that

\pspicture(-2,-.5)(30,2.75)
\psset{unit=0.2cm}
\put(-3.5,7){$\alpha=$}
\put(9,10.4){$\mu$}
\put(1,4.4){$\mu$}
\psline(0,0)(0,12)
\psline(0,12)(14,12)
\psline(14,12)(14,10)
\psline(14,10)(12,10)
\psline(12,10)(12,8)
\psline(12,8)(10,8)
\psline(10,8)(10,6)
\psline(10,6)(0,6)
\psline(6,6)(6,4)
\psline(6,4)(4,4)
\psline(4,4)(4,2)
\psline(4,2)(2,2)
\psline(2,2)(2,0)
\psline(2,0)(0,0)
\psline(8,12)(8,6)
\put(14,6){,}
\put(31.5,6.75){$\beta=$}
\put(44,10.4){$\la$}
\put(36,4.4){$\la$}
\psline(35,2)(35,12)
\psline(35,12)(51,12)
\psline(51,12)(51,11)
\psline(51,11)(49,11)
\psline(49,11)(49,9)
\psline(49,9)(47,9)
\psline(47,9)(47,8)
\psline(47,8)(43,8)
\psline(43,12)(43,5)
\psline(43,5)(41,5)
\psline(41,5)(41,3)
\psline(41,3)(39,3)
\psline(39,3)(39,2)
\psline(39,2)(35,2)
\psline(35,6)(43,6)
\put(51,6){.}
\endpspicture 

Then $\dim(\Hom_{\mathcal{S}_n}(\Delta(\alpha),\Delta(\beta)))=k^2$ by the generalized row and column removal theorems~\cite[Theorem 3.1]{LM:rowhoms} or~\cite[Prop.~10.4]{Donkin:tilting}.   
\end{proof}

In this paper, we exhibit pairs of partitions such that the homomorphism space between the corresponding Weyl modules is at least 2-dimensional.  In fact, we believe that it is exactly 2-dimensional, but this would be considerably harder to prove.  

\begin{theorem}  \label{Main}
For $a \geq b \geq c+1 \geq 4$, define partitions 
\begin{align*}
\mu&=\mu(a,b,c,e) = (ae-3,be-3,ce-3,e-1,e-1), \\
\la&=\la(a,b,c,e) = ((a+2)e-5,be-3,ce-3)),
\end{align*}
of some integer $n$.  
Then $\dim(\Hom_{\mathcal{S}_n}(\Delta(\mu),\Delta(\la))) \geq 2$.  
\end{theorem}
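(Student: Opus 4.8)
The plan is to exhibit two linearly independent homomorphisms $\theta_1,\theta_2\colon\Delta(\mu)\to\Delta(\la)$ explicitly. I work throughout with the standard combinatorial model, which computes $\Hom_{\mathcal S_n}(\Delta(\mu),\Delta(\la))$ using only the datum $e$: the construction and its verification will not involve $p$ in any essential way. Recall that $\Delta(\mu)$ has a canonical generator $v_\mu$, that a homomorphism $\theta\colon\Delta(\mu)\to\Delta(\la)$ is determined by $\theta(v_\mu)$, and that $\theta(v_\mu)$ may be any vector in the weight space $\Delta(\la)_\mu$ that is annihilated by the raising operators of $\mathcal S_n$ (equivalently, that satisfies the Garnir/straightening relations presenting $\Delta(\mu)$). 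Since $\Delta(\la)_\mu$ has a basis $\{v_T\}$ indexed by the semistandard $\la$-tableaux $T$ of content $\mu$, writing $\theta(v_\mu)=\sum_T c_T v_T$ converts the problem into showing that an explicit homogeneous linear system in the $c_T$ has solution space of rank at least $2$.

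The first step is to exploit the rigidity of these tableaux. Because $\la$ and $\mu$ agree in rows $2,3$ (one has $\la_2=\mu_2=be-3$ and $\la_3=\mu_3=ce-3$) and differ only in that $\la_1=\mu_1+\mu_4+\mu_5$ while rows $4,5$ of $\mu$ disappear, a semistandard $\la$-tableau of content $\mu$ must begin row $1$ with $\mu_1$ entries equal to $1$, after which all of rows $2$ and $3$ lie strictly below those $1$'s; the only genuine freedom is in how the $2e-2$ extra cells of row $1$, the ends of rows $2$ and $3$, and the $e-1$ fours together with the $e-1$ fives are arranged. I would use this (possibly after a row- or column-removal reduction in the spirit of \cite{LM:rowhoms}) to enumerate the tableaux that can occur as the dominant term of a homomorphism; I expect exactly two relevant leading candidates, the ``greedy'' (dominance-maximal) tableau $T_0$ and a second tableau $T_1$ obtained by a small redistribution of the fours, fives and threes among rows $1$--$3$, the existence of this alternative being made possible precisely by the equality $\mu_4=\mu_5=e-1$ together with the congruences $\mu_i\equiv-3$ ($i\le3$), $\mu_4\equiv\mu_5\equiv-1\pmod e$.

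The second step is the construction. For $\theta_1$ I would take the ``expected'' homomorphism, with dominant term $v_{T_0}$; I expect this to be either a composite of two elementary (row-type Carter--Payne) homomorphisms through an intermediate partition such as $\nu=((a+1)e-4,\,be-3,\,ce-3,\,e-1)$, or a direct one-step construction --- in any case an easy existence statement. For $\theta_2$ I would write down a second explicit weight-$\mu$ vector of $\Delta(\la)$ with dominant term $v_{T_1}$; the substantive part of the proof is then to verify that this vector is genuinely killed by all the raising operators, i.e.\ that once the non-standard $\la$-tableaux produced by the relations are straightened in $\Delta(\la)$, everything cancels. These cancellations must hold uniformly in $a\ge b\ge c+1\ge4$ (and independently of $p$) --- which is exactly why $\mu$ and $\la$ are built out of multiples of $e$ shifted by $-3$ and $-1$ --- and checking them is the technical core of the argument.

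Granted $\theta_1$ and $\theta_2$, linear independence is immediate: their dominant terms $v_{T_0}$ and $v_{T_1}$ are distinct (or, more robustly, one can point to a single semistandard tableau occurring with nonzero coefficient in $\theta_2(v_\mu)$ but not in $\theta_1(v_\mu)$), whence $\dim\Hom_{\mathcal S_n}(\Delta(\mu),\Delta(\la))\ge2$. The main obstacle is the verification in the second step: producing the ``extra'' homomorphism $\theta_2$ and seeing heuristically why it should exist is not the hard part, but proving that it respects every defining relation of $\Delta(\mu)$ --- that is, establishing the relevant straightening identities in $\Delta(\la)$ simultaneously for all admissible $a,b,c$ --- is where the real work lies, and is presumably also why the paper proves only the bound $\ge2$ rather than equality.
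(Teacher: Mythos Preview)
Your overall strategy is exactly the one the paper uses: identify $\Hom_{\mathcal S_n}(\Delta(\mu),\Delta(\la))$ with a space of semistandard-tableau combinations annihilated by certain relations, then exhibit two explicit such combinations and check that they are linearly independent. But what you have written is a plan, not a proof. You explicitly defer the entire content of the argument (``checking them is the technical core''), and nothing in the proposal pins down what $T_0$, $T_1$, $\theta_1$, or $\theta_2$ actually are. A referee could not verify a single cancellation from what you have written.

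Two more concrete points. First, your picture of the second homomorphism as supported near a single tableau $T_1$ obtained by ``a small redistribution'' is misleading. In the paper, the first homomorphism $\Theta$ is indeed supported on a single semistandard tableau
\[
\T=\rep{1^{ae-3}\,2^{e-1}\,3^{e-1}}{2^{(b-1)e-2}\,3^{e-1}}{3^{(c-2)e-1}\,4^{e-1}\,5^{e-1}},
\]
but the second homomorphism $\Phi$ is a genuinely large signed sum $\sum_{\A\in\mathcal A}\Theta_{\A}-q\sum_{\B\in\mathcal B}\Theta_{\B}$ over two families of tableaux (those with $\A^3_3=(c-1)e-2$ and those with $\B^3_3=(c-1)e-1$), with all distributions of $3,4,5$ in rows $1$ and $2$ allowed. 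The cancellations that make $\Phi$ a homomorphism come from summing these contributions via the identity $\sum\prod q^{(a_i-c_i)(c_{i+1}+\cdots)}\gauss{a_i}{c_i}=\gauss{m}{l}$ and then invoking $\gauss{ke-1+t}{t}=0$ for $1\le t\le e-1$; they would not be visible if you tried to build $\theta_2$ from a single dominant tableau plus correction terms. Second, your suggestion that $\theta_1$ might be obtained as a composite of Carter--Payne maps is plausible but is not what the paper does, and you would still owe a verification that the composite is nonzero; the paper instead verifies directly, using the same Gaussian-binomial vanishing, that $\Theta_\T$ is killed by every $h_{d,t}$.

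In short: right idea, but the proof is not there. To turn this into an argument you must (i) write down both homomorphisms explicitly as tableau combinations, and (ii) carry out the relation-by-relation check, which in the paper is organised via Lemma~\ref{AllTogetherNow} and reduces each case to an instance of $\gauss{ke-1+t}{t}=0$.
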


Using Lemma~\ref{DimGrows}, this is sufficient to prove the following result.

\begin{theorem} \label{ArbLarge}
Given any integer $l \geq 0$ there exist partitions $\alpha$ and $\beta$ of some integer $m$ such that 
\begin{align*}
\dim(\Hom_{\mathcal{S}_{m}}(\Delta(\alpha),\Delta(\beta)))&\geq l; \text{ and  hence}\\
\dim(\Hom_{\h_m}(S^\alpha,S^\beta))&\geq l.
\end{align*}
\end{theorem}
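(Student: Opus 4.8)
The plan is to deduce Theorem~\ref{ArbLarge} from Theorem~\ref{Main} by iterating the dimension-squaring construction of Lemma~\ref{DimGrows}. First I would invoke Theorem~\ref{Main} to fix, once and for all, a concrete pair of partitions $\mu=\mu(a,b,c,e)$ and $\la=\la(a,b,c,e)$ (with, say, $a=b-1=c+2=5$, or whatever satisfies $a\ge b\ge c+1\ge 4$) of some integer $n$, so that
\[
\dim\bigl(\Hom_{\mathcal{S}_n}(\Delta(\mu),\Delta(\la))\bigr)\ge 2.
\]
Note that this single input is enough and that it works for \emph{every} admissible $e\ge 2$ and \emph{every} characteristic $p\ge 0$, since Theorem~\ref{Main} has no restriction on $p$; this is the key advantage over Dodge's approach.

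The main step is then a bootstrapping argument. Lemma~\ref{DimGrows} as stated squares the homomorphism dimension, but its proof actually shows something slightly more flexible: given partitions $\gamma$ and $\delta$ of some integer with $\dim(\Hom_{\mathcal{S}}(\Delta(\gamma),\Delta(\delta)))=k$, one produces partitions whose Weyl modules have homomorphism space of dimension exactly $k^2$. I would first record the elementary fact that a homomorphism space of dimension $\ge d$ always contains a subobject realizing any smaller value is not quite what is needed --- rather, I would re-run the construction of Lemma~\ref{DimGrows} starting from a pair with $\dim \ge 2$ and observe that the generalized row and column removal theorems give $\dim(\Hom(\Delta(\alpha),\Delta(\beta)))=\dim(\Hom(\Delta(\mu),\Delta(\la)))^2\ge 4$ for the new pair. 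Iterating $t$ times produces, from our fixed starting pair, partitions $\alpha^{(t)},\beta^{(t)}$ of some integer $m_t$ with
\[
\dim\bigl(\Hom_{\mathcal{S}_{m_t}}(\Delta(\alpha^{(t)}),\Delta(\beta^{(t)}))\bigr)\ \ge\ 2^{2^{t}}.
\]
Since $2^{2^t}\to\infty$, for any given $l\ge 0$ I may choose $t$ large enough that $2^{2^t}\ge l$, and set $m=m_t$, $\alpha=\alpha^{(t)}$, $\beta=\beta^{(t)}$; this gives the first displayed inequality of the theorem. The second inequality, $\dim(\Hom_{\h_m}(S^\alpha,S^\beta))\ge l$, is then immediate from the inequality $\dim(\Hom_{\h_n}(S^\mu,S^\la)) \geq \dim(\Hom_{\mathcal{S}_n}(\Delta(\mu),\Delta(\la)))$ recalled in the Introduction (from~\cite{DJ}).

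I do not expect a serious obstacle here: the proof is essentially bookkeeping once Theorem~\ref{Main} and Lemma~\ref{DimGrows} are available. The only point requiring a moment's care is checking that the partition pair output by Lemma~\ref{DimGrows} may legitimately be fed back in as input --- i.e.\ that the construction can be iterated --- which holds because the construction only uses that the homomorphism space is nonzero (forcing the dominance order conditions $a\ge b$, $\la_1\ge\mu_1$ used in the proof) and produces a genuine pair of partitions of a genuine integer. Alternatively, and even more cleanly, one can avoid iteration altogether by a direct argument: apply Lemma~\ref{DimGrows} once to get a pair with homomorphism dimension $4$, then note that $4\ge l$ already fails for large $l$, so iteration really is needed; hence I would present the iterated version. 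An entirely equivalent phrasing, which I might prefer for brevity, is: by Theorem~\ref{Main} there is a pair with $\dim\ge 2$; by Lemma~\ref{DimGrows} (applied repeatedly) the set of achievable dimensions $\{\dim(\Hom_{\mathcal{S}}(\Delta(\alpha),\Delta(\beta)))\}$ is closed under $k\mapsto k^2$ and contains some $k\ge 2$, hence is unbounded; and the $\h_m$ statement follows by~\cite{DJ}.
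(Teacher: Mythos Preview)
Your proposal is correct and follows exactly the paper's approach: the paper simply asserts that Theorem~\ref{Main} together with Lemma~\ref{DimGrows} suffices, and your iteration argument makes this explicit. (Your illustrative constants $a=b-1=c+2=5$ do not actually satisfy $a\ge b$, but as you note any valid choice such as $a=b=4$, $c=3$ works.)
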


If the results of Chaung and Tan~\cite{ChuangTan} hold for the $q$-Schur algebra (rather than just the Schur algebra) then the proof of Theorem~\ref{ArbLarge} almost follows from the work of Dodge (and Lemma~\ref{DimGrows}): only the cases $e=2,3,4$ would not be covered.  We note that Lemma~\ref{DimGrows} is the only result we know that allows us to build large homomorphism spaces from smaller ones; for example, for small $e$ we do not know of any pair of partitions such that the homomorphism space between the corresponding Weyl modules has dimension 3.      

\section{Proof of Theorem~\ref{Main}}
In this section, we give the proof of the main result.  Fix a field $F$ and an element $q \in F^\times$ such that $e=\min\{f\geq 2 \mid 1+q+\ldots+q^{f-1}=0\}$ exists. For $n \geq 0$ let $\mathcal{S}_n = \mathcal{S}_{F,q}(\sym_n)$ and $\h_n=\h_{F,q}(\sym_n)$.  The characteristic of the field plays no further role in this paper.  We first recall a method to determine the dimension of the homomorphism space between a pair of Weyl modules.   For full details, we refer the reader to~\cite[Section 2.2]{L:Construct}.

\subsection{Homomorphism spaces}
Fix partitions $\la$ and $\mu$ of $n$.  For every composition $\nu$ of $n$, we define $m_\nu \in \h_n$ and a cyclic right $\h_n$-module $M^\nu=m_\nu \h$.  
Let $\mathcal{T}_{\text{r}}(\la,\nu)$ denote the set of row-standard $\la$-tableaux of type $\nu$, with $\mathcal{T}_0(\la,\nu)\subseteq \mathcal{T}_{\text{r}}(\la,\nu)$ the subset of semistandard tableaux.  For each $\T \in \mathcal{T}_{\text{r}}(\la,\nu)$ we define a $\h_n$-homomorphism $\Theta_\T:M^\nu \rightarrow S^\la$ such that $\{\Theta_\T \mid \T \in \mathcal{T}_0(\la,\nu)\}$ are linearly independent.

Let $\ell(\nu)$ denote the number of parts of any composition $\nu$.  
For $1 \leq d < \ell(\mu)$ and $1 \leq t \leq \mu_{d+1}$ we define an element $h_{d,t} \in \h_n$.  Let $\EHom_{\h_n}(M^\mu,S^\la)$ be the space spanned by $\{\Theta_\T \mid \T \in \mathcal{T}_0(\la,\mu)\}$ and let
\[\Psi(\mu,\la) = \{\Theta \in \EHom_{\h_n}(M^\mu,S^\la) \mid \Theta(m_\mu h_{d,t}) =0 \text{ for all } 1 \leq d < \ell(\mu), 1 \leq t \leq \mu_{d+1} \}.\]
This definition is motivated by the following result which follows from~\cite[Theorem 2.2]{L:Construct} and the remark following~\cite[Corollary 2.4]{L:Construct}.

\begin{lemma} \label{EqualSpace}
\[\Psi(\mu,\la) \cong_F \Hom_{\mathcal{S}_n}(\Delta(\mu),\Delta(\la)).\]
\end{lemma}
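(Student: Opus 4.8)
The plan is to obtain the isomorphism by assembling the two facts from~\cite{L:Construct} that the sentence preceding the statement singles out, namely~\cite[Theorem 2.2]{L:Construct} and the remark following~\cite[Corollary 2.4]{L:Construct}; the actual work is then recalling what those say and checking that the present notation matches theirs. First I would recall that, by the theory of semistandard homomorphisms, the maps $\{\Theta_\T \mid \T \in \mathcal{T}_0(\la,\mu)\}$ are not merely linearly independent but span all of $\Hom_{\h_n}(M^\mu,S^\la)$, so that $\EHom_{\h_n}(M^\mu,S^\la) = \Hom_{\h_n}(M^\mu,S^\la)$. The content of~\cite[Theorem 2.2]{L:Construct} is then that the submodule of $M^\mu$ generated by the elements $m_\mu h_{d,t}$, over $1 \le d < \ell(\mu)$ and $1 \le t \le \mu_{d+1}$, is precisely the kernel of the natural surjection of $M^\mu$ onto the cyclic $\h_n$-module through which the Weyl module $\Delta(\mu)$ is visible under the Schur functor; equivalently, a homomorphism $\Theta\colon M^\mu \to S^\la$ factors through that quotient exactly when $\Theta(m_\mu h_{d,t}) = 0$ for all admissible $d,t$. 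This already identifies $\Psi(\mu,\la)$ with the space of homomorphisms from that quotient to $S^\la$.

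What the remark after~\cite[Corollary 2.4]{L:Construct} contributes is the upgrade from $\h_n$ to $\mathcal{S}_n$: it says that this last space is not merely sandwiched between $\Hom_{\mathcal{S}_n}(\Delta(\mu),\Delta(\la))$ and $\Hom_{\h_n}(S^\mu,S^\la)$, as the inequality of~\cite{DJ} would allow, but is isomorphic as an $F$-vector space to $\Hom_{\mathcal{S}_n}(\Delta(\mu),\Delta(\la))$ itself, with no hypothesis on $q$. Conceptually the reason is that a homomorphism $\Delta(\mu) \to \Delta(\la)$ is determined by the image of a highest-weight generator of $\Delta(\mu)$, which must lie in the weight-$\mu$ space $\Delta(\la)_\mu$; that weight space carries a standard basis indexed by $\mathcal{T}_0(\la,\mu)$, matching the basis $\{\Theta_\T\}$ of $\EHom_{\h_n}(M^\mu,S^\la)$, and the elements $m_\mu h_{d,t}$ are exactly the relations presenting $\Delta(\mu)$ as a cyclic module, so annihilating them is exactly the condition for the chosen image to be a genuine highest-weight vector. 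I would simply quote this.

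The one point requiring care — and essentially the only thing to check — is the alignment of conventions: whether the Schur functor sends $\Delta(\mu)$ to $S^\mu$ or to $S^{\mu'}$, the precise normalization of $m_\nu \in \h_n$ and of the $h_{d,t}$, and the exact range of pairs $(d,t)$ over which vanishing is imposed. Since only an $F$-linear isomorphism is asserted, module structure need not be tracked through these identifications, only dimensions; but one should confirm that the indexing set $\mathcal{T}_0(\la,\mu)$ and the constraint set $\{(d,t) : 1\le d<\ell(\mu),\ 1\le t\le\mu_{d+1}\}$ occur identically in~\cite{L:Construct}, and that the argument there does not secretly invoke $q\neq-1$ — it does not, that hypothesis entering only the separate comparison of $\Hom_{\mathcal{S}_n}(\Delta(\mu),\Delta(\la))$ with $\Hom_{\h_n}(S^\mu,S^\la)$. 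With the conventions matched, $\Psi(\mu,\la)\cong_F\Hom_{\mathcal{S}_n}(\Delta(\mu),\Delta(\la))$ is immediate from the cited results.
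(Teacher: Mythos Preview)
Your proposal is correct and follows exactly the paper's approach: the paper gives no argument beyond the sentence preceding the lemma, which attributes the result to \cite[Theorem~2.2]{L:Construct} together with the remark after \cite[Corollary~2.4]{L:Construct}, and your write-up is simply an unpacking of what those two citations assert and how they combine.
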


We therefore want to determine $\Psi(\mu,\la)$.  First we set up some notation. If $\T \in \mathcal{T}_{\text{r}}(\la,\nu)$, let $\T^i_j$ denote the number of entries of $\T$ which lie in row $j$ and which are equal to $i$.  We extend this definition by setting $\T^{>i}_j = \sum_{k>i} \T^k_j$, and similarly for other definitions.  If $m \geq 0$ define 
\[ [m] = 1+q+\ldots + q^{m-1} \in F.\]
Let $[0]!=1$ and for $m \geq 1$, set $[m]! = [m][m-1]!$.  If $m \geq j \geq 0$, set 
\[\gauss{m}{j} = \frac{[m]!}{[j]![m-j]!}.\]
For integers $m$ and $j$, if any of the conditions $m \geq j \geq 0$ fail we define $\gauss{m}{j}=0$.  Using Lemma~\ref{GaussSum} below or otherwise, it is straightforward to show that $\gauss{m}{k}$ is then well-defined for any $m,k \in \mathbb{Z}$, and may be considered as an element of $\mathbb{Z}[q]$.    

\begin{lemma}[\cite{L:Construct} Proposition 2.7] \label{Lemma5}  
Suppose that $\T \in \RowT(\la,\mu)$.   
Choose $d$ with $1 \leq d <\ell(\mu)$ and $t$ with $1 \leq t \leq \mu_{d+1}$.  Let $\mathcal{S}$ be the set of row-standard tableaux obtained by replacing $t$ of the entries in $\T$ which are equal to $d+1$ with $d$.  Each tableau $\S \in \mathcal{S}$ will be of type $\nu(d,t)$ where
\[\nu(d,t)_j = 
\begin{cases}
\mu_j+t, & j=d, \\
\mu_j-t, & j=d+1, \\
\mu_j, & \text{otherwise}.
\end{cases} \\ \]  
Recall that $\Theta_\T: M^\mu \rightarrow S^\la$ and $\Theta_{\S}:M^{\nu(d,t)} \rightarrow S^\la$.  Then 
\[\Theta_\T(m_\mu h_{d,t}) = \sum_{\S \in \mathcal{S}} \left(  \prod_{j= 1}^{\ell(\la)} q^{\T^d_{>j}(\S^d_j - \T^d_j)} \gauss{\S^d_j}{\T^d_j}\right) \Theta_\S (m_{\nu(d,t)}).\] 
\end{lemma}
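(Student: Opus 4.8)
The plan is to prove the identity as an equality of elements of $S^\la$ by a direct computation in the Hecke algebra, reducing first to a two-row situation and then collecting powers of $q$ into Gaussian binomials. The starting point is that $\Theta_\T$ is a homomorphism of right $\h_n$-modules, so $\Theta_\T(m_\mu h_{d,t}) = \Theta_\T(m_\mu)\,h_{d,t}$ and the whole problem becomes one of understanding right multiplication by $h_{d,t}$. I would first recall that $h_{d,t}$ is a sum $\sum_w q^{\ell(w)} T_w$ over the distinguished coset representatives $w$ that transfer $t$ of the positions of row $d+1$ of the Young subgroup $\sym_\mu$ into row $d$. Each such $w$ lies in the parabolic subalgebra generated by the Coxeter generators strictly between rows $d$ and $d+1$, and it commutes with the symmetrisers of the remaining rows of $\mu$ inside $m_\mu$; this localises the element $m_\mu h_{d,t}$, and hence its image, to rows $d$ and $d+1$, so that only the boxes of content $d$ and $d+1$ are affected while the remaining content of $\T$ is inert.

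I would then expand $\Theta_\T(m_\mu)$ using the definition of the homomorphism and let the individual terms $T_w$ of $h_{d,t}$ act on the right. At the level of content, each term converts $t$ of the boxes of content $d+1$ into boxes of content $d$; summing over the full set of coset representatives realises exactly the tableaux $\S \in \mathcal{S}$ obtained from $\T$ by relabelling $t$ of the $(d+1)$-entries as $d$, each occurring with a scalar coefficient $c_\S$ that must be identified. For a fixed $\S$ I would assemble $c_\S$ row by row: in row $j$ of $\la$ there are $\S^d_j - \T^d_j$ newly created content-$d$ boxes to be interleaved with the $\T^d_j$ boxes of content $d$ already present, and the Hecke-algebra length function contributes a power of $q$ for each new box according to the content-$d$ boxes it must be reordered past, recorded by $\T^d_{>j}$. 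The interleaving within row $j$ produces a factor $\gauss{\S^d_j}{\T^d_j}$ and the reordering produces $q^{\T^d_{>j}(\S^d_j - \T^d_j)}$; taking the product over $j$ gives $c_\S$. A parallel evaluation of $\Theta_\S(m_{\nu(d,t)})$ identifies it with the image in which these boxes are already in place, so that $\sum_{\S} c_\S\,\Theta_\S(m_{\nu(d,t)})$ reproduces $\Theta_\T(m_\mu)\,h_{d,t}$, as required.

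The main obstacle is the $q$-arithmetic in the second step. Expanding right multiplication by $h_{d,t}$ generates a large number of row-standard configurations, each weighted by a power of $q$ coming from the length of its coset representative, and the task is to show that these recombine into the single clean coefficient $c_\S$ attached to each $\S$. This is a $q$-analogue of a multinomial counting argument, and I would carry it out by repeatedly applying the $q$-Vandermonde-type identity recorded in Lemma~\ref{GaussSum} to collapse the partial sums into Gaussian binomial coefficients. A secondary point is to confirm that the straightening never escapes the span of the $\{\Theta_\S(m_{\nu(d,t)}) \mid \S \in \mathcal{S}\}$: because the computation is localised to rows $d$ and $d+1$, every tableau produced is automatically row-standard of type $\nu(d,t)$, so no extraneous terms survive.
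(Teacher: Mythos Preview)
The paper does not prove this lemma at all: it is quoted verbatim from \cite{L:Construct}, Proposition~2.7, and is used here purely as a black box. So there is no proof in the present paper against which to compare your proposal.

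As to the proposal itself, the overall shape is the standard one for results of this type: use that $\Theta_\T$ is a module map, identify $h_{d,t}$ with a sum over distinguished coset representatives, observe that only entries equal to $d$ or $d+1$ are affected, and then compute row by row. Where your sketch becomes vague is precisely the step you flag yourself, the recombination of the $q$-weighted row-standard configurations into a single product of Gaussian binomials. Saying you would ``repeatedly apply Lemma~\ref{GaussSum}'' is not quite a proof; what is actually needed is a careful bookkeeping of the length contributions of the coset representatives, typically by an explicit bijection or an induction on $t$, and this is the substantive content of the argument in \cite{L:Construct}. Your description of the factor $q^{\T^d_{>j}(\S^d_j-\T^d_j)}$ as coming from reordering past content-$d$ boxes in lower rows is on the right track but would need to be justified rather than asserted. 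In short: the strategy is correct, but the sketch as written does not yet constitute a proof, and since the paper itself simply imports the result you would have to consult \cite{L:Construct} for the details.
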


\begin{lemma}[\cite{L:Construct} Proposition 2.9]  \label{Lemma7}
Suppose $\la$ is a partition of $n$ and $\nu$ is a composition of $n$.
Let $\S \in \RowT(\la,\nu)$.  
Suppose $1 \leq r\leq \ell(\la)-1$ and that $1 \leq d \leq \ell(\nu)$.  Let
\[\mathcal{G} =\left\{g=(g_1,g_2,\ldots,g_{\ell(\nu)}) \mid g_d=0, \, \sum_{i=1}^{\ell(\nu)} g_i =\S^d_{r+1} \text{ and } g_i \leq \S^{i}_{r} \text{ for } 1 \leq i \leq \ell(\nu)\right\}.\]
For $g \in \mathcal{G}$, let $\bar{g}_{d-1} = \sum_{i=1}^{d-1}g_i$ and let $\U_g$ be the row-standard tableau formed from $\S$ by moving all entries equal to $d$ from row $r+1$ to row $r$ and for $i \neq d$ moving $g_i$ entries equal to $i$ from row $r$ to row $r+1$. Then
\[\Theta_\S = (-1)^{\S^d_{r+1}} q^{-\binom{\S^d_{r+1}+1}{2}} q^{-\S^d_{r+1}S^{<d}_{r+1}} \sum_{g \in \mathcal{G}} q^{\bar{g}_{d-1}} \prod_{i=1}^{\ell(\nu)} q^{g_i \S^{<i}_{r+1}} \gauss{\S^i_{r+1}+g_i}{g_i}\Theta_{\U_g}.\]
\end{lemma}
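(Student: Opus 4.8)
The plan is to prove the identity as an equality of $\h_n$-homomorphisms $M^\nu\to S^\la$, verified on the cyclic generator $m_\nu$. The first point to observe is that $\S$ and every $\U_g$ have the same type $\nu$: the operation defining $\U_g$ only relocates entries between rows $r$ and $r+1$, moving the $\S^d_{r+1}$ entries equal to $d$ up and, since $g_d=0$ and $\sum_i g_i=\S^d_{r+1}$, moving exactly that many entries back down, so both the multiset of values and the shape $\la$ are preserved. Hence $\Theta_\S$ and all the $\Theta_{\U_g}$ are genuinely maps out of the single cyclic module $M^\nu=m_\nu\h$, and by cyclicity it suffices to check that the two sides agree on $m_\nu$, that is, as elements of $S^\la$. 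For this I would use the explicit expansion of $\Theta_\T(m_\nu)$ in the standard basis of $S^\la$ coming from the Dipper--James semistandard construction recalled in \cite[Section 2.2]{L:Construct}; this is the same machinery that produces Lemma~\ref{Lemma5}.

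Since the operation $\S\mapsto\U_g$ is local to rows $r$ and $r+1$, I would fix the entries in all other rows and reduce to a two-row straightening computation, which factors into two independent mechanisms whose $q$-weights account termwise for the stated coefficient. The first is the absorption of the block of $m:=\S^d_{r+1}$ entries equal to $d$ from row $r+1$ into row $r$: this is a $q$-analogue of a Garnir-type straightening relation, and the antisymmetrization over the $m$ equal entries contributes the scalar $(-1)^{m}q^{-\binom{m+1}{2}}$, while sliding this block past the $\S^{<d}_{r+1}$ strictly smaller entries in its row contributes the crossing weight $q^{-m\S^{<d}_{r+1}}$. The second mechanism is the compensating redistribution: for each $i\neq d$ one moves $g_i$ entries equal to $i$ down into row $r+1$ and interleaves them among the $\S^i_{r+1}$ entries of value $i$ already there, which produces the Gaussian binomial $\gauss{\S^i_{r+1}+g_i}{g_i}$ exactly as in Lemma~\ref{Lemma5}, together with the inversion weights $q^{g_i\S^{<i}_{r+1}}$ and the global factor $q^{\bar g_{d-1}}$ recording the crossings of the moved entries with entries of strictly smaller value.

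I would then assemble these two mechanisms into the stated closed form. The base case $m=0$ forces $\mathcal{G}=\{0\}$ and $\U_0=\S$, so the right-hand side reduces to $\Theta_\S$; in general I would expand $\Theta_\S(m_\nu)$ using the explicit formula and recognise the outcome as the asserted sum by means of a $q$-Vandermonde convolution that collapses the contributions of the individual moved entries into the product $\prod_i\gauss{\S^i_{r+1}+g_i}{g_i}$. The main obstacle I anticipate is the bookkeeping of $q$-powers through the non-commutative Hecke action: one must verify that the exponents $m\S^{<d}_{r+1}$, $g_i\S^{<i}_{r+1}$ and $\bar g_{d-1}$ are exactly the inversion numbers generated as the blocks are transported between the two rows, and that the sign and the power $q^{-\binom{m+1}{2}}$ emerge with the correct normalization from the relation $T_i^2=(q-1)T_i+q$. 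I would pin these down by first checking the cases $m=1$ and a single nonzero $g_i$, which isolate respectively the antisymmetrization scalar and a single Gaussian binomial, before treating the general case.
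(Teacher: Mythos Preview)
The paper does not prove this lemma at all: it is simply quoted as Proposition~2.9 of \cite{L:Construct}, with no argument given here. There is therefore no proof in the present paper against which to compare your proposal.

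As to the proposal itself: the strategy is the right one. The identity is a Garnir-type straightening relation for the maps $\Theta_\T$, localized to rows $r$ and $r+1$; your observation that $\S$ and every $\U_g$ have the same type $\nu$ is correct and essential, and the Gaussian binomials $\gauss{\S^i_{r+1}+g_i}{g_i}$ do arise from merging the $g_i$ moved entries with the $\S^i_{r+1}$ entries already present, just as in Lemma~\ref{Lemma5}. However, what you have written is a plan rather than a proof. The decisive step---actually applying the Garnir element to $\Theta_\S(m_\nu)$ in the Murphy basis and tracking the resulting $q$-powers to obtain exactly $(-1)^{m}q^{-\binom{m+1}{2}-m\S^{<d}_{r+1}}$ and $q^{\bar g_{d-1}}\prod_i q^{g_i\S^{<i}_{r+1}}$---is only asserted, and you yourself flag the $q$-bookkeeping as the main obstacle. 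Your suggested checks for $m=1$ and for a single nonzero $g_i$ are sensible sanity cases, but the general argument (whether by induction on $m$ or by a direct Murphy-basis computation as in \cite{L:Construct}) still needs to be written out before this counts as a proof.
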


In the following section, we apply these two lemmas to find elements of $\Psi(\mu,\la)$.  

\begin{ex} \label{Ex1} Let $e=2$.  Take $\la=(7,5,3)$ and $\mu=(5,5,3,1,1)$.  We identify a $\la$-tableau $\T$ of type $\nu \unrhd \mu$ with the image $\Theta_\T(m_{\nu}) \in S^\la$.  Recall that if $\la \not \! \unrhd \nu$ then $\mathcal{T}_0(\la,\nu) = \emptyset$ so that we immediately have $\Theta(m_\mu h_{1,t}) = 0$ for $t=3,4,5$ and $\Theta(m_\mu h_{2,3})= 0$.  
\begin{enumerate}
\item Let $\Theta(m_\mu)=\tab(1111123,22223,345)$.  Then
\begin{align*}
\Theta(m_\mu h_{4,1}) & = [2] \tab(1111123,22223,344) \\ &= 0, \\
\Theta(m_\mu h_{3,1}) & = [2] \tab(1111123,22223,335)\\ & = 0, \\
\Theta(m_\mu h_{2,1}) & = q^4 [2] \tab(1111122,22223,345)  +  [5] \tab(1111123,22222,345) + \tab(1111123,22223,245)  \\
&=  q^4 [2] \tab(1111122,22223,345)  +  [5] \tab(1111123,22222,345) - \tab(1111123,22222,345)\\& =0, \\
\Theta(m_\mu h_{2,2}) & = q^4 [2][5] \tab(1111122,22222,345)  +  q^4 [2] \tab(1111122,22223,245) + [5] \tab(1111123,22222,245)  \\
&=  q^4 [2][5] \tab(1111122,22222,345)  -q^4 [2] \tab(1111122,22222,345) \\& =0, \\
\end{align*}
\begin{align*}
\Theta(m_\mu h_{1,1}) & = [6] \tab(1111113,22223,345) + \tab(1111123,12223,345) \\
 & = [6] \tab(1111113,22223,345) - [4] \tab(1111113,22223,345) - q^3 [2] \tab(1111112,22233,345)\\ &=0, \\
\Theta(m_\mu h_{1,2}) & = [6] \tab(1111113,12223,345) + \tab(1111123,11223,345) \\
 & = -q^3[6][2]\tab(1111111,22233,345) + q^3[3][2]\tab(1111111,22233,345)\\ &=0, \\
\end{align*}    
so that $\Theta \in \Psi(\mu,\la)$.  
\item Let 
\[\Phi=\tab(1111125,22224,333) + \tab(1111124,22225,333) + \tab(1111125,22223,334)+\tab(1111124,22223,335) + \tab(1111123,22225,334) + \tab(1111123,22224,335).\]
Then $\Phi \in \Psi(\mu,\la)$.  
\end{enumerate}
\end{ex}

\subsection{Gaussian Polynomials}
In order to tell if a homomorphism $\Theta$ lies in $\Psi(\mu,\la)$ we record some results about the Gaussian polynomials $\gauss{m}{j}$.  The first is well-known. 

\begin{lemma} \label{GaussSum}
Suppose $m,j \geq 0$.  Then
\begin{align*}
\gauss{m+1}{j} & = \gauss{m}{j-1} + q^{j} \gauss{m}{j} \\
&=\gauss{m}{j} + q^{m-j+1}\gauss{m}{j-1}. 
\end{align*}
\end{lemma}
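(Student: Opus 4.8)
The plan is to verify both identities by direct algebraic manipulation of the defining formula $\gauss{m}{j} = [m]!/([j]!\,[m-j]!)$, splitting the argument into the generic range $1 \le j \le m$ and a handful of boundary cases governed by the zero convention for $\gauss{m}{j}$.

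First I would dispose of the degenerate cases. When $j = 0$, both sides of each identity collapse to $\gauss{m+1}{0} = \gauss{m}{0} = 1$ once the vanishing of $\gauss{m}{-1}$ is invoked; when $j = m+1$ they collapse to $\gauss{m+1}{m+1} = \gauss{m}{m} = 1$ using $\gauss{m}{m+1} = 0$; and when $j \ge m+2$ every binomial coefficient that appears is zero by convention, so both identities read $0 = 0$. This leaves the generic range $1 \le j \le m$, in which all of $\gauss{m+1}{j}$, $\gauss{m}{j-1}$, and $\gauss{m}{j}$ are given honestly by the factorial formula, with $[j-1]!$, $[m-j]!$, and $[m-j+1]!$ all well defined.

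The arithmetic engine is the geometric-series splitting $[a+b] = [a] + q^{a}[b]$, valid for all $a,b \ge 0$, since $[a] + q^{a}[b] = (1 + \cdots + q^{a-1}) + (q^{a} + \cdots + q^{a+b-1}) = [a+b]$. For the first identity I would place $\gauss{m}{j-1}$ and $q^{j}\gauss{m}{j}$ over the common denominator $[j]!\,[m-j+1]!$, using $[j]! = [j]\,[j-1]!$ and $[m-j+1]! = [m-j+1]\,[m-j]!$ (both legal in the generic range). Factoring $[m]!$ out of the numerator leaves the scalar $[j] + q^{j}[m-j+1]$, which by the splitting identity with $a = j$ and $b = m-j+1$ equals $[m+1]$; hence the sum is $[m+1]!/([j]!\,[m-j+1]!) = \gauss{m+1}{j}$, as required.

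The second identity is the mirror image of the first. Over the same common denominator $[j]!\,[m-j+1]!$, the terms $\gauss{m}{j}$ and $q^{m-j+1}\gauss{m}{j-1}$ contribute the numerator scalar $[m-j+1] + q^{m-j+1}[j]$, which is again $[m+1]$ by the splitting identity with the two blocks exchanged, namely $a = m-j+1$ and $b = j$. Since there is no genuine analytic difficulty here, the only point demanding care is the bookkeeping of the boundary cases, so that both identities are seen to hold uniformly for all $m,j \ge 0$ rather than merely on the generic range $1 \le j \le m$.
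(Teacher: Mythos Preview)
Your argument is correct: the boundary cases are handled cleanly by the zero convention, and in the generic range the splitting identity $[a+b]=[a]+q^{a}[b]$ does exactly the job you describe. The paper itself offers no proof of this lemma, merely recording it as ``well-known'', so there is nothing to compare against; your direct verification via the factorial definition is the standard one and would serve perfectly well.
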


\begin{lemma} [\cite{L:Construct} Lemma~2.6] \label{GaussLemma}
Suppose $m, k \geq l \geq 0$.  Then,
\[\sum_{j \geq 0}(-1)^j q^{\binom{j}{2}}\gauss{l}{j}\gauss{m-j}{k} = q^{l(m-k)}\gauss{m-l}{k-l}.\]
\end{lemma}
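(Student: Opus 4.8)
The plan is to prove the identity by induction on $l$, using both forms of the Pascal-type recurrence recorded in Lemma~\ref{GaussSum}. Write
\[ F(l,m,k) = \sum_{j \geq 0}(-1)^j q^{\binom{j}{2}}\gauss{l}{j}\gauss{m-j}{k} \]
for the left-hand side, noting that the sum is finite since $\gauss{l}{j}=0$ for $j>l$. For the base case $l=0$ only the term $j=0$ survives, giving $F(0,m,k)=\gauss{m}{k}$, which agrees with the right-hand side $q^{0}\gauss{m}{k}$.

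For the inductive step I would assume the result for $l-1$ (with $l \geq 1$) and apply the first recurrence $\gauss{l}{j}=\gauss{l-1}{j-1}+q^{j}\gauss{l-1}{j}$ to each coefficient in $F(l,m,k)$. This splits $F(l,m,k)$ into two sums; in the sum coming from $\gauss{l-1}{j-1}$ I would reindex $j\mapsto j+1$ and use $\binom{j+1}{2}=\binom{j}{2}+j$ so that both sums are written over the common factor $(-1)^{j}q^{\binom{j}{2}+j}\gauss{l-1}{j}$. After combining, the remaining factor becomes the difference $\gauss{m-j}{k}-\gauss{m-1-j}{k}$, and here I would invoke the second form of Lemma~\ref{GaussSum}, namely $\gauss{(m-1-j)+1}{k}=\gauss{m-1-j}{k}+q^{(m-1-j)-k+1}\gauss{m-1-j}{k-1}$, which collapses this difference to $q^{m-j-k}\gauss{m-1-j}{k-1}$. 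The extra factor $q^{j}$ carried by both sums then cancels against the $q^{-j}$ inside $q^{m-j-k}$, leaving
\[ F(l,m,k) = q^{m-k}\sum_{j \geq 0}(-1)^{j}q^{\binom{j}{2}}\gauss{l-1}{j}\gauss{(m-1)-j}{k-1} = q^{m-k}\,F(l-1,m-1,k-1). \]

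Since $m,k\geq l\geq 1$ forces $m-1,k-1\geq l-1\geq 0$, the inductive hypothesis applies and gives $F(l-1,m-1,k-1)=q^{(l-1)(m-k)}\gauss{m-l}{k-l}$; multiplying by $q^{m-k}$ yields exactly $q^{l(m-k)}\gauss{m-l}{k-l}$, which closes the induction. The only real subtlety is the bookkeeping of the $q$-powers: one must track $\binom{j+1}{2}=\binom{j}{2}+j$ through the reindexing and apply the two \emph{different} Pascal recurrences of Lemma~\ref{GaussSum}, the first to the factor $\gauss{l}{j}$ and the second to the factor $\gauss{m-j}{k}$, so that the exponents telescope to the clean prefactor $q^{m-k}$. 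I expect this exponent arithmetic, rather than any conceptual difficulty, to be the part that needs care.
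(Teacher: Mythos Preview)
Your inductive argument is correct: the split via the first Pascal recurrence, the reindexing using $\binom{j+1}{2}=\binom{j}{2}+j$, and the collapse of $\gauss{m-j}{k}-\gauss{m-1-j}{k}$ via the second recurrence all go through exactly as you describe, and the hypotheses $m,k\geq l$ ensure that the terms $\gauss{m-1-j}{k-1}$ lie in the range where Lemma~\ref{GaussSum} applies (since $j\leq l-1\leq m-1$ and $k\geq 1$ in the inductive step).

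There is nothing to compare against, however: in this paper the lemma is simply quoted from \cite{L:Construct} (Lemma~2.6 there) and no proof is supplied here. Your argument is a clean self-contained verification of the cited identity.
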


\begin{lemma} \label{GaussDie1}
Suppose that $m \geq 0$ and write $m=m^\ast e+m'$ where $0 \leq m' <e$.  If $m'<j \leq e-1$ then \[\gauss{m}{j}=0.\]  
\end{lemma}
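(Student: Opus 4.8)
The plan is to reduce everything to the single fact that $1+q+\cdots+q^{e-1}=0$ in $F$, by writing the Gaussian polynomial as a product of consecutive $q$-integers.

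First I would deal with the degenerate case: if $m<j$ then $\gauss{m}{j}=0$ by the conventions fixed above (the condition $m\ge j$ fails), and since $j\le e-1$ this can only occur when $m<e$, i.e.\ $m^\ast=0$; so from here on I may assume $m\ge j$, and then $m'<j$ forces $m^\ast\ge 1$. Next I would use the identity, valid in $\mathbb{Z}[q]$ for $m\ge j\ge 0$,
\[[j]!\,\gauss{m}{j}=[m-j+1][m-j+2]\cdots[m],\]
which is immediate from the definition $\gauss{m}{j}=[m]!/\bigl([j]!\,[m-j]!\bigr)$ (if one prefers to avoid fractions it can be proved by a short induction on $m$ from Lemma~\ref{GaussSum} together with $[a+b]=[a]+q^{a}[b]$). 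The crucial observation is that, because $0\le m'<j\le e-1$, the integer $m^\ast e=m-m'$ lies in the range of the factors on the right-hand side: $m^\ast e\le m$ since $m'\ge 0$, and $m^\ast e\ge m-j+1$ since $m'\le j-1$. Hence $[m^\ast e]$ occurs as one of those factors, and since
\[[m^\ast e]=\bigl(1+q^{e}+q^{2e}+\cdots+q^{(m^\ast-1)e}\bigr)\bigl(1+q+\cdots+q^{e-1}\bigr)=0\quad\text{in }F,\]
the whole product vanishes in $F$, whence $[j]!\,\gauss{m}{j}=0$ in $F$.

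To finish I would note that $[j]!=[1][2]\cdots[j]$ is a product of $q$-integers $[i]$ with $1\le i\le e-1$, and each such $[i]$ is nonzero in $F$ because $[1]=1$ and because $e$ was chosen minimal with $1+q+\cdots+q^{e-1}=0$. Thus $[j]!$ is a nonzero element of the field $F$ and may be cancelled, giving $\gauss{m}{j}=0$. There is no serious obstacle in this argument; the only points that must not be fumbled are the two places where the hypotheses are genuinely used, namely the inequality $m-j+1\le m^\ast e\le m$, which needs $0\le m'<j$, and the invertibility of $[j]!$ in $F$, which needs $j\le e-1$.
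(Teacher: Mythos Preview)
Your proof is correct and follows essentially the same approach as the paper: both argue that the numerator $[m][m-1]\cdots[m-j+1]$ contains the factor $[m^\ast e]=0$ while every factor $[i]$ with $1\le i\le j\le e-1$ in the denominator is nonzero in $F$. You are simply more explicit than the paper about the degenerate case $m<j$ and about why $[j]!$ may be cancelled.
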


\begin{proof}
Write
\[\gauss{m}{j} = \frac{[m][m-1]\ldots [m-j+1]}{[j][j-1]\ldots [1]}\]
so that one of the terms in the numerator and none of the terms in the denominator are equal to zero.    
\end{proof}

The next lemma follows immediately. 
\begin{lemma} \label{GaussDie}
Suppose $1 \leq j \leq e-1$.  Then 
\[\gauss{ae-1+j}{j} =0\]
for all $a \geq 0$.  
\end{lemma}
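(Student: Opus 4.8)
The plan is to deduce this immediately from Lemma~\ref{GaussDie1}. First I would set $m = ae-1+j$ and rewrite it as $m = ae + (j-1)$. The hypothesis $1 \leq j \leq e-1$ guarantees that $0 \leq j-1 \leq e-2 < e$, so this expression is exactly the Euclidean division of $m$ by $e$ in the notation of Lemma~\ref{GaussDie1}: one has $m^\ast = a$ and $m' = j-1$.

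Next I would check that the hypothesis $m' < j \leq e-1$ of Lemma~\ref{GaussDie1} is satisfied. Indeed $m' = j-1 < j$ trivially, and $j \leq e-1$ by assumption. Lemma~\ref{GaussDie1} then yields $\gauss{ae-1+j}{j} = 0$, as required.

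There is essentially no obstacle here; the only point worth a moment's thought is the boundary case $a = 0$, where $m = j-1 \geq 0$. The argument above still applies verbatim (now with $m^\ast = 0$), and in any case $\gauss{j-1}{j} = 0$ already because the condition $m \geq j$ fails. Hence the statement holds for all $a \geq 0$.
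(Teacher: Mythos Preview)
Your proof is correct and is exactly the argument the paper intends: the paper simply states that the lemma ``follows immediately'' from Lemma~\ref{GaussDie1}, and your computation $ae-1+j = ae+(j-1)$ with $m'=j-1<j\leq e-1$ is precisely how that implication is verified.
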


\begin{lemma} \label{ThreeProd}
Suppose $m \geq l \geq0$, that $k \geq 1$ and that $a_1,\ldots,a_k \geq 0$ are such that $\sum_{i=1}^k a_i=m$.  Then
\[\sum_{c_1+\ldots+c_k=l} \prod_{i=1}^k q^{(a_i-c_i) (c_{i+1}+\ldots+c_k)} \gauss{a_i}{c_i} = \gauss{m}{l}.\]
\end{lemma}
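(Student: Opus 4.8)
The plan is to induct on $k$, the number of parts. For $k=1$ the sum has a single term with $c_1 = l$ (the empty product of $q$-powers is $1$), giving $\gauss{a_1}{l} = \gauss{m}{l}$, so the base case is immediate. For the inductive step I would split off the last part $a_k$: write $c_k = j$ and $\sum_{i<k} c_i = l-j$, and $\sum_{i<k} a_i = m - a_k$. The factor $q^{(a_i - c_i)(c_{i+1}+\ldots+c_k)}$ for $i < k$ contains the contribution of $c_k = j$, namely $q^{(a_i-c_i) j}$, which multiplies out over $i=1,\ldots,k-1$ to $q^{j \sum_{i<k}(a_i - c_i)} = q^{j((m-a_k)-(l-j))}$; the remaining part of each such factor, together with the $i=k$ factor $q^{(a_k - j) \cdot 0} = 1$, is exactly the summand appearing in the $(k-1)$-part identity for the parts $a_1,\ldots,a_{k-1}$ and index $l-j$. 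Carrying out this regrouping carefully, the inner sum over $c_1,\ldots,c_{k-1}$ with $\sum c_i = l-j$ collapses by the inductive hypothesis to $\gauss{m-a_k}{l-j}$.

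What remains is the single-variable identity
\[
\sum_{j \geq 0} q^{j((m-a_k)-(l-j))} \gauss{m-a_k}{l-j}\gauss{a_k}{j} = \gauss{m}{l},
\]
i.e. after relabelling $r = m - a_k$ and $s = a_k$ (so $r+s = m$),
\[
\sum_{j} q^{j(r-l+j)} \gauss{r}{l-j}\gauss{s}{j} = \gauss{r+s}{l}.
\]
This is the standard $q$-Vandermonde convolution. I would prove it by a short secondary induction on $s$ (or on $r$) using Lemma~\ref{GaussSum}: expanding $\gauss{s}{j} = \gauss{s-1}{j} + q^{j}\gauss{s-1}{j-1}$ (or the other form) splits the left side into two sums, one of which is the $q$-Vandermonde identity for $(r,s-1,l)$ and the other, after shifting $j \mapsto j+1$, for $(r,s-1,l-1)$; matching $q$-powers against the recursion $\gauss{r+s}{l} = \gauss{r+s-1}{l} + q^{r+s-l}\gauss{r+s-1}{l-1}$ then closes the induction, with the case $s=0$ forcing $j=0$ and giving $\gauss{r}{l} = \gauss{r}{l}$.

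The one point requiring care — and the main obstacle — is the bookkeeping of the $q$-exponents in the inductive step on $k$: one must verify that the powers of $q$ carried by the dropped variable $c_k$ across the factors $i=1,\ldots,k-1$ combine to precisely $q^{j(r-l+j)}$ and not something off by a shift, since the $\gauss{}{}$ symbols are only symmetric up to such powers. Everything else is routine. Note also that, as remarked before Lemma~\ref{GaussSum}, all the Gaussian polynomials here are genuine elements of $\mathbb{Z}[q]$ even when some upper index is smaller than the lower one (the symbol is then $0$), so no convergence or well-definedness issue arises and the finite sums make sense termwise.
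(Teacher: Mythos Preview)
Your argument is correct. The exponent bookkeeping you flagged as the main obstacle does work out exactly as you wrote: fixing $c_k=j$ and factoring $q^{(a_i-c_i)c_k}$ out of each $i<k$ term gives $q^{j\sum_{i<k}(a_i-c_i)}=q^{j((m-a_k)-(l-j))}$, and the residual product is precisely the $(k-1)$-part sum for $a_1,\ldots,a_{k-1}$ at level $l-j$; your secondary induction on $s$ for the $q$-Vandermonde step then closes against Lemma~\ref{GaussSum} exactly as you describe.

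The paper takes a different route: it inducts directly on $m$ (for all $k$ simultaneously), expanding only the last factor via $\gauss{a_k}{c_k}=\gauss{a_k-1}{c_k}+q^{a_k-c_k}\gauss{a_k-1}{c_k-1}$, which splits the sum into the $(a_1,\ldots,a_k-1)$ instance at level $l$ plus $q^{m-l}$ times the $(a_1,\ldots,a_k-1)$ instance at level $l-1$, and then applies the Pascal recursion $\gauss{m}{l}=\gauss{m-1}{l}+q^{m-l}\gauss{m-1}{l-1}$. This is a single induction rather than your two nested ones, and so is a bit more economical; on the other hand, your approach makes transparent that the lemma is nothing but an iterated $q$-Vandermonde (indeed your secondary induction on $s$ is essentially the paper's argument specialised to $k=2$), which is conceptually nice and connects the statement to a standard identity. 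The paper also remarks that a bijective proof is available by counting $l$-dimensional subspaces of an $m$-dimensional space over a finite field.
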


\begin{proof}
The result is true for $m=0$ so suppose that $m \geq 1$ and that the lemma holds for $m-1$.  Using Lemma~\ref{GaussSum} and the inductive hypothesis,
\begin{align*}
\sum_{c_1+\ldots+c_k=l}  \prod_{i=1}^k & q^{(a_i-c_i) (c_{i+1}+\ldots+c_k)} \gauss{a_i}{c_i} \\
& = \sum_{c_1+\ldots+c_k=l}  \left( \prod_{i=1}^{k-1} q^{(a_i-c_i) (c_{i+1}+\ldots+c_k)} \gauss{a_i}{c_i} \right) \left(\gauss{a_k-1}{c_k}+q^{a_k-c_k}\gauss{a_k-1}{c_k-1}\right)  \\
&=\sum_{c_1+\ldots+c_k=l}  \left(\prod_{i=1}^{k-1} q^{(a_i-c_i) (c_{i+1}+\ldots+c_k)} \gauss{a_i}{c_i}\right) \gauss{a_k-1}{c_k} \\
&\hspace*{0.75cm} + q^{a_1+\ldots+a_k-c_1-\ldots-c_k} \sum_{c_1+\ldots+c_k=l-1}  \left(\prod_{i=1}^{k-1} q^{(a_i-c_i) (c_{i+1}+\ldots+c_k)} \gauss{a_i}{c_i}\right) \gauss{a_k-1}{c_k} \\
&=\gauss{m-1}{l}+q^{m-l}\gauss{m-1}{l-1} \\
&=\gauss{m}{l}
\end{align*}  
as required. 

An alternative proof may be constructed by counting the number of $l$-dimensional vector spaces of an $m$-dimensional vector space over the finite fields.  
\end{proof}

\subsection{Elements of $\Psi(\mu,\la)$}
We are now ready to prove Theorem~\ref{Main}.  Fix $a \geq b \geq c+1 \geq 4$ and define partitions 
\begin{align*}
\mu&=\mu(a,b,c,e) = (ae-3,be-3,ce-3,e-1,e-1), \\
\la&=\la(a,b,c,e) = ((a+2)e-5,be-3,ce-3)),
\end{align*}
of some integer $n$.  
If $\T \in \mathcal{T}_{\text{r}}(\la,\nu)$ for some $\nu \unrhd \mu$, recall that $\T^i_j$ is the number of entries equal to $i$ in row $j$ of $\T$.  We denote $\T$ by  
\[\T=\rep{1^{\T^1_1} \, 2^{\T^2_1}\,3^{\T^3_1}\, 4^{\T^4_1}\,5^{\T^5_1}}{1^{\T^1_2} \, 2^{\T^2_2}\,3^{\T^3_2}\, 4^{\T^4_2}\,5^{\T^5_2} }{1^{\T^1_3} \, 2^{\T^2_3}\,3^{\T^3_3}\, 4^{\T^4_3}\,5^{\T^5_3}},\]
where we omit terms if $\T^i_j=0$.  
Our strategy is to define linearly independent elements $\Theta$ and $\Phi$ in $\EHom_{\h_n}(S^\mu,S^\la)$ and use Lemmas~\ref{Lemma5} and Lemma~\ref{Lemma7} to show that $\Theta(m_\mu h_{d,t}) = \Phi(m_\mu h_{d,t})=0$ for all $1 \leq d \leq 4$ and $1 \leq t \leq \mu_{d+1}$.  Theorem~\ref{Main} then follows by Lemma~\ref{EqualSpace}. 

\begin{lemma} \label{AllTogetherNow}
Suppose that $\T \in \mathcal{T}_0(\la,\mu)$ has the form
\[\T = \rep{1^{ae-3} \, 2^{e-1}\,3^{\T^3_1}\,4^{\T^4_1}\,5^{\T^5_1}}{2^{(b-1)e-2}\,3^{\T^3_2}\,4^{\T^4_2}\,5^{\T^5_2}}{3^{\T^3_3}\,4^{\T^4_3}\,5^{\T^5_3}}.\]
Then the following results hold.  
\begin{enumerate}
\item Suppose $1 \leq t \leq e-1$.  Write $\T \xrightarrow{4,t}\S$ if $\S$ is a row-standard $\nu$-tableau formed from $\T$ by changing $t$ entries equal to 5 in $\T$ into 4s.  If $\T \xrightarrow{4,t}\S$ then $\S$ is semistandard and
\[\Theta_\T(m_\mu h_{4,t}) = \sum_{\T \xrightarrow{4,t}\S} q^{(\T^4_3+\T^4_2)(\S^4_1-\T^4_1)} \gauss{\S^4_1}{\T^4_1} q^{\T^4_3(\S^4_2-\T^4_2)}\gauss{\S^4_2}{\T^4_2}\gauss{\S^4_3}{\T^4_3} \Theta_\S(m_\nu).\]
\item Suppose $1 \leq t \leq e-1$.  Write $\T \xrightarrow{3,t}\S$ if $\S$ is a row-standard $\nu$-tableau formed from $\T$ by changing $t$ entries equal to 4 in $\T$ into 3s.  If $\T \xrightarrow{3,t}\S$ then $\S$ is semistandard and
\[\Theta_\T(m_\mu h_{3,t}) = \sum_{\T \xrightarrow{3,t}\S} q^{(\T^3_2+\T^3_3)(\S^3_1-\T^3_1)} \gauss{\S^3_1}{\T^3_1} q^{\T^3_3(\S^3_2-\T^3_2)}\gauss{\S^3_2}{\T^3_2}\gauss{\S^3_3}{\T^3_3} \Theta_\S(m_\nu).\]
\item Suppose $1 \leq t \leq \mu_3-1$.  Write $\T \xrightarrow{2,t}\S$ if $\S$ is a row-standard $\nu$-tableau formed from $\T$ by first changing $t$ entries equal to 3 in $\T$ into 2s in the second and third rows and then exchanging all entries equal to $2$ in row $3$ with entries not equal to $2$ in row $2$.  
If $\T \xrightarrow{2,t}\S$ then $\S$ is semistandard and
\[\Theta_\T(m_\mu h_{2,t}) = \sum_{\T \xrightarrow{2,t}\S} (-1)^{\T^3_3-\S^3_3} q^{\binom{\T^3_3-\S^3_3}{2}+\S^3_3 t} \gauss{(b-1)e-2+t-\T^3_3}{(b-1)e-2-\S^3_3}  q^{\T^4_3(\S^5_3-\T^5_3)}\gauss{\S^4_3}{\T^4_3} \gauss{\S^5_3}{\T^5_3}\Theta_\S(m_\nu) .\]
In particular, $\Theta_\T(m_\mu h_{2,t})=0$ for $t>e-1$.  
\item Suppose $1 \leq t \leq \mu_2-1$.  Write $\T \xrightarrow{1,t}\S$ if $\S$ is a row-standard $\nu$-tableau formed from $\T$ by first changing $t$ entries equal to 2 in $\T$ into 1s and then exchanging all entries equal to $1$ in row $2$ with entries not equal to $1$ in row $1$.  
If $\T \xrightarrow{1,t}\S$ then $\S$ is semistandard and
\begin{multline*} \Theta_\T(m_\mu h_{1,t}) = \sum_{\T \xrightarrow{1,t}\S} (-1)^{\T^2_2-\S^2_2} q^{\binom{\T^2_2-\S^2_2}{2}+\S^2_2 t}\gauss{(a-b+1)e-1+t}{ae-3-\S^2_2}\\ q^{\T^3_2(\S^4_2-\T^4_2)}q^{(\T^3_2+\T^4_2)(\S^5_2-\T^5_2)} \gauss{\S^3_2}{\T^3_2}\gauss{\S^4_2}{\T^4_2}\gauss{\S^5_2}{\T^5_2}\Theta_\S(m_\nu) \end{multline*}
where $\T^2_2 = (b-1)e-2$.  In particular, $\Theta_\T(m_\mu h_{1,t})=0$ for $t>2e-2$.   
\end{enumerate}
\end{lemma}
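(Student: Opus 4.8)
The plan is to obtain parts~(1) and~(2) as direct applications of Lemma~\ref{Lemma5}, and parts~(3) and~(4) as two-stage computations: first apply Lemma~\ref{Lemma5} to recolour the $(d+1)$-entries of $\T$ as $d$-entries, then apply Lemma~\ref{Lemma7} to straighten the resulting row-standard tableaux into semistandard ones, and finally compress the accumulated products of Gauss polynomials using Lemmas~\ref{GaussSum} and~\ref{ThreeProd}. All of this rests on a little arithmetic with $\la$ and $\mu$: for the given $\T$ one checks that each of rows~$1$ and~$2$ of $\T$ contains exactly $e-1$ entries that are $\geq 3$, that row~$1$ contains exactly $2e-2$ entries that are $\geq 2$, and---using $a\geq b\geq c+1$, so that $(b-1)e-2\geq ce-3$---that row~$2$ carries a $2$ in every column that meets row~$3$. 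These facts are what make the tableaux $\S$ on the right-hand sides semistandard and what yield the two ``in particular'' vanishing statements.

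For part~(1) I apply Lemma~\ref{Lemma5} with $d=4$. Since $\ell(\la)=3$, the product there runs over $j=1,2,3$ only, and writing $\T^4_{>1}=\T^4_2+\T^4_3$ and $\T^4_{>2}=\T^4_3$ reproduces the three displayed factors. It then remains to check that every row-standard $\nu$-tableau obtained by relabelling $t$ of the fives of $\T$ as fours is in fact semistandard; this is a short column-by-column verification that uses $a\geq b\geq c+1$ to control where the new fours can sit relative to the entries in the rows above. Part~(2) is the same argument with $d=3$ and fours relabelled as threes.

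For parts~(3) and~(4) I first apply Lemma~\ref{Lemma5} with $d=2$ (respectively $d=1$), obtaining a sum over the row-standard tableaux $\S'$ got from $\T$ by recolouring $t$ threes as twos (respectively $t$ twos as ones). In part~(3), any $\S'$ in which one of the $e-1$ entries $\geq 3$ of row~$1$ has been recoloured acquires the factor $\gauss{e-1+k}{e-1}=\gauss{e-1+k}{k}$ with $1\leq k\leq e-1$, which vanishes by Lemma~\ref{GaussDie}; this is why the statement restricts the recolouring to rows~$2$ and~$3$. In part~(4) there is no such clean cancellation---recolouring a row-$1$ two contributes the factor $\gauss{ae-3+k}{k}$, which for $e\geq 3$ need not be zero---so all recolourings are kept, with the several paths leading to the same final tableau being summed. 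In either case the surviving $\S'$ carry twos in row~$3$ (respectively ones in row~$2$) and are not semistandard, so I next apply Lemma~\ref{Lemma7}, with $r=2$ and clearing twos out of row~$3$ (respectively with $r=1$, clearing ones out of row~$2$), to rewrite each $\Theta_{\S'}$ in terms of genuinely semistandard $\Theta_{\U_g}$. Collecting, for a fixed semistandard $\S$, the total coefficient becomes a sum over the way the $t$ recolourings distribute among the relevant rows, over the straightening datum $g$ of Lemma~\ref{Lemma7}, and over column positions inside a row; Lemma~\ref{GaussSum} disposes of the first, and each positional sum is exactly an instance of Lemma~\ref{ThreeProd}, which compresses a product of several Gauss polynomials into the single binomial appearing in the statement, while the overall sign and the $q$-power prefactor are precisely those of Lemma~\ref{Lemma7}. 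The two ``in particular'' assertions then follow from the arithmetic of the first paragraph: after recolouring and straightening, row~$2$ is left with $e-1-t$ entries $\geq 3$ in part~(3), and in part~(4) any semistandard $\S$ of the prescribed type must place all $ae-3+t$ ones in row~$1$, leaving only $2e-2-t$ further cells there; so the index sets are empty unless $t\leq e-1$, respectively $t\leq 2e-2$.

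The step I expect to be the main obstacle is the coefficient bookkeeping in parts~(3) and~(4): lining up, term by term, the $q$-exponents produced by Lemma~\ref{Lemma5} with those produced by the straightening of Lemma~\ref{Lemma7}, and then recognising the surviving multi-sums as instances of Lemma~\ref{ThreeProd}. Part~(4) is the harder of the two, precisely because the row-$1$ recolourings do not drop out and one must verify that they reassemble into the stated coefficient. The semistandardness checks are routine but genuinely use $a\geq b\geq c+1\geq 4$, so they should be recorded with some care.
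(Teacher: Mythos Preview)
Your overall architecture is right—Lemma~\ref{Lemma5} followed by Lemma~\ref{Lemma7}—and the semistandardness checks and the two ``in particular'' vanishing arguments are sound. The gap is in the collapsing step for parts~(3) and~(4): the tools you name will not do the job.

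After applying Lemma~\ref{Lemma5} (restricted, in part~(3), to rows~$2$ and~$3$ as you correctly argue) and then Lemma~\ref{Lemma7}, the coefficient of a fixed semistandard $\S$ is a sum over a single index $j$, the number of recolourings placed in the lower of the two rows. Because of the factor $(-1)^{\S^d_{r+1}}$ in Lemma~\ref{Lemma7}, this is an \emph{alternating} sum; concretely, in part~(3) one obtains (up to a $j$-independent factor)
\[
\sum_{j\ge 0}(-1)^j\, q^{\binom{j}{2}}\gauss{\S^3_3}{j}\gauss{(b-1)e-2+t-\T^3_3+\S^3_3-j}{(b-1)e-2}.
\]
Neither Lemma~\ref{GaussSum} (a two-term Pascal recursion) nor Lemma~\ref{ThreeProd} (a positive-term $q$-Vandermonde) can collapse an alternating sum of this shape. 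The identity that does is Lemma~\ref{GaussLemma}, and that is what the paper invokes; Lemma~\ref{ThreeProd} plays no role in the proof of this lemma (it enters only later, in the analysis of $\Phi$). Note also that for a fixed target $\S$ the straightening datum $g$ of Lemma~\ref{Lemma7} is uniquely determined, so there is no separate ``positional sum'' for Lemma~\ref{ThreeProd} to act on.

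Your observation about part~(4) is correct: $\gauss{ae-3+s}{s}$ need not vanish for $e\ge 3$, so the row-$1$ recolourings are not pruned by Lemma~\ref{GaussDie}. But this does not make part~(4) structurally harder. One again arrives at a single alternating sum—now indexed by $s$, with sign $(-1)^{t-s}$ coming from Lemma~\ref{Lemma7}—of exactly the same shape, and Lemma~\ref{GaussLemma} collapses it to the displayed binomial $\gauss{(a-b+1)e-1+t}{ae-3-\S^2_2}$. That is what the paper's ``identical lines'' means: the same mechanism, just without the preliminary row-$1$ pruning that was available in part~(3).
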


\begin{proof}
To check the tableaux $\S$ are semistandard, observe that $ae-3 \geq be-3$ and that $(b-1)e-2 \geq ce-3$.  Parts (1) and (2) are then just restatements of Lemma~\ref{Lemma5}.  Now consider (3).  Use Lemma~\ref{Lemma5} to write $\Theta_\T$ as a linear combination of terms $\Theta_\R(m_\nu)$ where $\R$ is formed from $\T$ by changing entries equal to 3 into 2s.  If $s>0$ entries are changed in the first row then the term occurs with coefficient a multiple of  $\gauss{e-1+s}{s}=0$ by Lemma~\ref{GaussDie} so we may assume all entries changed are in the last two rows.  It then follows from Lemma~\ref{Lemma7} that  $\Theta_\T(m_\mu h_{2,t}) = \sum_{\T \xrightarrow{2,t}\S} b(\S) \Theta_\S(m_\nu)$ where
\[b(\S) = \sum_{j \geq 0} (-1)^j q^{-\binom{j+1}{2}} q^{j(j-\T^3_3+\S^3_3)} \gauss{(b-1)e-2+t-j}{t-j} q^{\T^3_3(\S^4_3-\T^4_3)}q^{(\T^3_3+\T^4_4)(\S^5_3-\T^5_3)} \gauss{S^3_3}{T^3_3-j}\gauss{S^4_3}{\T^4_3} \gauss{\S^5_3}{\T^5_3}. \ \]   
Changing the limits of the sum and applying Lemma~\ref{GaussLemma} we obtain
\begin{align*}
b(\S) &= (-1)^{\T^3_3-\S^3_3} q^{\T^3_3(\S^4_3-\T^4_3)}q^{(\T^3_3+\T^4_4)(\S^5_3-\T^5_3)} q^{-\binom{\T^3_3-\S^3_3+1}{2}}  \gauss{S^4_3}{\T^4_3} \gauss{\S^5_3}{\T^5_3} \\
& \hspace*{0.75cm} \sum_{j \geq 0} (-1)^j q^{\binom{j}{2}} \gauss{\S^3_3}{j} \gauss{(b-1)e-2+t-j-\T^3_3+\S^3_3}{(b-1)e-2} \\
&=  (-1)^{\T^3_3-\S^3_3}  q^{\T^3_3(\S^4_3-\T^4_3)}q^{(\T^3_3+\T^4_4)(\S^5_3-\T^5_3)} q^{-\binom{\T^3_3-\S^3_3+1}{2}} \gauss{S^4_3}{\T^4_3} \gauss{\S^5_3}{\T^5_3}  q^{\S^3_3(t-\T^3_3+\S^3_3)} \gauss{(b-1)e-2+t-\T^3_3}{(b-1)e-2-\S^3_3} \\
&=  (-1)^{\T^3_3-\S^3_3} q^{\binom{\T^3_3-\S^3_3}{2}+\S^3_3 t} \gauss{(b-1)e-2+t-\T^3_3}{(b-1)e-2-\S^3_3}  q^{\T^4_3(\S^5_3-\T^5_3)}\gauss{\S^4_3}{\T^4_3} \gauss{\S^5_3}{\T^5_3}
\end{align*}
as required.

The proof of part (4) of the lemma follows on identical lines.   
\end{proof}

\begin{proposition} Define a tableau $\T \in \mathcal{T}_0(\la,\mu)$ by
\[\T = \rep{1^{ae-3} \, 2^{e-1}\,3^{e-1}}{2^{(b-1)e-2}\,3^{e-1}}{3^{(c-2)e-1}\,4^{e-1}\,5^{e-1}}\]
and let $\Theta=\Theta_\T$.  Then $\Theta \in \Psi(\mu,\la)$.  
\end{proposition}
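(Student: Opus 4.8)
The plan is to check directly that $\Theta(m_\mu h_{d,t}) = 0$ for every $1 \le d \le 4$ and every $1 \le t \le \mu_{d+1}$; since $\T \in \mathcal{T}_0(\la,\mu)$ we already have $\Theta \in \EHom_{\h_n}(M^\mu,S^\la)$, so this will place $\Theta$ in $\Psi(\mu,\la)$. The observation that makes everything go is that $\T$ has exactly the shape treated in Lemma~\ref{AllTogetherNow}, with $\T^3_1 = \T^3_2 = e-1$, $\T^3_3 = (c-2)e-1$, $\T^4_3 = \T^5_3 = e-1$ and $\T^i_j = 0$ for all other pairs with $i \ge 3$; so for each $d$ I will substitute $\T$ into the appropriate part of Lemma~\ref{AllTogetherNow} and show that every coefficient occurring there vanishes. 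The two values $t = \mu_3$ (with $d=2$) and $t = \mu_2$ (with $d=1$) lie outside the hypotheses of Lemma~\ref{AllTogetherNow}(3),(4); for these I will instead compare the first one or two partial sums of $\la$ with those of $\nu(d,t)$, and the hypotheses $a \ge b \ge c+1 \ge 4$ then give $\la \not\unrhd \nu(d,t)$, so $\mathcal{T}_0(\la,\nu(d,t)) = \emptyset$ and $\Theta(m_\mu h_{d,t}) = 0$ at once.

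The cases $d = 4$, $d = 3$ and $d = 2$ are all of the same simple type. For $d = 4$ (so $1 \le t \le e-1$), Lemma~\ref{AllTogetherNow}(1) produces a unique $\S$, obtained by turning $t$ fives of row $3$ into fours; since $\S^4_1 = \S^4_2 = \T^4_1 = \T^4_2 = 0$ the coefficient of $\Theta_\S(m_\nu)$ collapses to $\gauss{e-1+t}{e-1} = \gauss{e-1+t}{t}$, which is $0$ by Lemma~\ref{GaussDie}. For $d = 3$ it is identical, with $\S^3_1 = \S^3_2 = \T^3_1 = \T^3_2 = e-1$, so Lemma~\ref{AllTogetherNow}(2) gives the coefficient $\gauss{(c-2)e-1+t}{(c-2)e-1} = \gauss{(c-2)e-1+t}{t} = 0$ by Lemma~\ref{GaussDie} (using $c - 2 \ge 0$). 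For $d = 2$, Lemma~\ref{AllTogetherNow}(3) already gives $0$ when $t > e-1$, so I may assume $1 \le t \le e-1$; here I will check that the procedure $\T \xrightarrow{2,t}\S$ returns row $3$ of $\T$ unchanged and a single $\S$, with $\S^3_3 = (c-2)e-1 = \T^3_3$ and $\S^4_3 = \S^5_3 = e-1$, whereupon the coefficient collapses to $q^{((c-2)e-1)t}\gauss{(b-c+1)e-1+t}{(b-c+1)e-1} = q^{((c-2)e-1)t}\gauss{(b-c+1)e-1+t}{t} = 0$ by Lemma~\ref{GaussDie}, since $b - c + 1 \ge 2 \ge 0$.

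The delicate case is $d = 1$; by Lemma~\ref{AllTogetherNow}(4), $\Theta(m_\mu h_{1,t}) = 0$ for $t > 2e-2$, so assume $1 \le t \le 2e-2$. The tableaux $\S$ with $\T \xrightarrow{1,t}\S$ will be parametrized by $k := \T^2_2 - \S^2_2$; tracing the procedure shows $0 \le k \le e-1$ (the first row of $\T$ supplies only $e-1$ threes to be moved down), $\S^3_2 = e-1+k$, $\S^4_2 = \S^5_2 = 0$, and — crucially — $k \ge \max\{0,\,t-e+1\}$, so that $k = 0$ can occur only when $t \le e-1$. For $k \ge 1$ the factor $\gauss{\S^3_2}{\T^3_2} = \gauss{e-1+k}{e-1} = \gauss{e-1+k}{k}$ is $0$ by Lemma~\ref{GaussDie}, killing that summand; for $k = 0$ (so $t \le e-1$) the coefficient reduces to $q^{((b-1)e-2)t}\gauss{(a-b+1)e-1+t}{(a-b+1)e-1} = q^{((b-1)e-2)t}\gauss{(a-b+1)e-1+t}{t} = 0$ by Lemma~\ref{GaussDie}, since $a - b + 1 \ge 1 \ge 0$. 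Hence $\Theta(m_\mu h_{1,t}) = 0$ in every case. I expect this last case to be the main obstacle: its whole point is that $k = 0$ is ruled out exactly when $t \ge e$, which is precisely what prevents the leftover coefficient $\gauss{(a-b+1)e-1+t}{t}$ — a quantity that need not vanish once $t \ge e$ (for instance it equals $a - b + 1$ in $F$ when $t = e$) — from destroying the cancellation. Everything else, namely reading off the parametrizations of the $\S$ and simplifying the coefficients produced by Lemma~\ref{AllTogetherNow}, is routine.
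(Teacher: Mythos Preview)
Your argument is correct and follows exactly the paper's route: you invoke Lemma~\ref{AllTogetherNow} for each $d$, reduce every coefficient to a Gaussian binomial of the form $\gauss{me-1+t}{t}$ with $1\le t\le e-1$, and kill it with Lemma~\ref{GaussDie}; in the case $d=1$ your parametrisation by $k=\T^2_2-\S^2_2$ is precisely the paper's observation that $\gauss{\S^3_2}{e-1}=0$ unless $\S^3_2=e-1$, which forces $t\le e-1$. You are in fact slightly more careful than the paper in explicitly treating the endpoint values $t=\mu_3$ and $t=\mu_2$ via the dominance argument $\la\not\unrhd\nu(d,t)$.
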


\begin{proof}  Note that $\T$ has the form described in Lemma~\ref{AllTogetherNow}.  Suppose $1 \leq t \leq e-1$.  Then applying Lemma~\ref{AllTogetherNow} and Lemma~\ref{GaussDie},
\begin{align*}
\Theta(m_\mu h_{4,t}) & = \gauss{e-1+t}{t} \rep{1^{ae-3} \, 2^{e-1}\,3^{e-1}}{2^{(b-1)e-2}\,3^{e-1}}{3^{(c-2)e-1}\,4^{e-1+t}\,5^{e-1-t}} =0; \\
\Theta(m_\mu h_{3,t}) & = \gauss{(c-2)e-1+t}{t} \rep{1^{ae-3} \, 2^{e-1}\,3^{e-1}}{2^{(b-1)e-2}\,3^{e-1}}{3^{(c-2)e-1+t}\,4^{e-1-t}\,5^{e-1}} =0; \\
\Theta(m_\mu h_{2,t}) 
&= q^{((c-2)e-1)t} \gauss{(b-c+1)e-1+t}{t}\rep{1^{ae-3} \, 2^{e-1}\,3^{e-1}}{2^{(b-1)e-2+t}\,3^{e-1-t}}{3^{(c-2)e-1}\,4^{e-1}\,5^{e-1}} =0.\\
\intertext{Now suppose $1 \leq t \leq 2e-2$.  Then}
\Theta(m_\mu h_{1,t}) &= \sum_{\T \xrightarrow{1,t} \S} (-1)^{\T^2_2-\S^2_2} q^{\binom{\T^2_2-\S^2_2}{2}+\S^2_2t} \gauss{(a-b+1)e-1+t}{ae-3-\S^2_2} \gauss{\S^3_2}{e-1}.
\end{align*}
But if $\T \xrightarrow{d,t} \S$ then $\gauss{\S^3_2}{e-1}=0$ unless $\S^3_2=e-1$; and if $\S^3_2=e-1$ then $1 \leq t \leq e-1$ and then
\[ \gauss{(a-b+1)e-1+t}{ae-3-\S^2_2} = \gauss{(a-b+1)e-1+t}{t}=0\] 
by Lemma~\ref{GaussDie}.  Hence $\Theta(m_\mu h_{d,t})=0$ for all $1 \leq d \leq 4$ and all $1 \leq t \leq \mu_{d+1}$ as required.  
\end{proof}

\begin{proposition}
Let $\mathcal{A}$ denote the set of $\la$-tableaux $\A$ of type $\mu$ which have the form
\[\rep{1^{ae-3} \, 2^{e-1}\,3^{\A^3_1}\, 4^{\A^4_1}\,5^{\A^5_1}}{2^{(b-1)e-2}\,3^{\A^3_2}\, 4^{\A^4_2}\,5^{\A^5_2}}{3^{(c-1)e-2}\,4^{\A^4_3}\,5^{\A^5_3}}\]
and  $\mathcal{B}$ denote the set of $\la$-tableaux $\B$ of type $\mu$ which have the form
\[\rep{1^{ae-3} \, 2^{e-1}\,3^{\B^3_1}\, 4^{\B^4_1}\,5^{\B^5_1}}{2^{(b-1)e-2}\,3^{\B^3_2}\, 4^{\B^4_2}\,5^{\B^5_2}}{3^{(c-1)e-1}\,4^{\B^4_3}\,5^{\B^5_3}}\]
so that all $\A \in \mathcal{A} \cup \mathcal{B}$ are semistandard.  
Set \[\Phi = \sum_{\A \in \mathcal{A}} \Theta_\A - q \sum_{\B \in \mathcal{B}} \Theta_\B.\]
Then $\Phi \in \Psi(\mu,\la)$.  
\end{proposition}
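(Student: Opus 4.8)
The plan is to check that $\Phi(m_\mu h_{d,t})=0$ for every $1\le d\le 4$ and every $1\le t\le\mu_{d+1}$, which, since $\Phi$ already lies in $\EHom_{\h_n}(M^\mu,S^\la)$, places $\Phi$ in $\Psi(\mu,\la)$ by definition. Every tableau in $\mathcal{A}\cup\mathcal{B}$ has the form to which Lemma~\ref{AllTogetherNow} applies, with $\A^3_3=(c-1)e-2$ when $\A\in\mathcal{A}$ and $\A^3_3=(c-1)e-1$ when $\A\in\mathcal{B}$; in either case each of rows $1$ and $2$ carries exactly $e-1$ entries that are $\ge 3$, while row $3$ carries $e-1$ entries $\ge 4$ if $\A\in\mathcal{A}$ and $e-2$ entries $\ge 4$ if $\A\in\mathcal{B}$. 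Fixing $d$ and $t$, write $\Phi(m_\mu h_{d,t})=\sum_\S c_\S\,\Theta_\S(m_{\nu(d,t)})$ over semistandard $\S$; as these $\Theta_\S$ are linearly independent it is enough to show each $c_\S=0$. In every case the routine is the same: apply Lemma~\ref{AllTogetherNow}, collect the preimages of a fixed $\S$ that lie in $\mathcal{A}$ and those that lie in $\mathcal{B}$, sum the coefficient over the free way a block of equal entries can be distributed among the rows using Lemma~\ref{ThreeProd}, and read off vanishing from Lemma~\ref{GaussDie} or Lemma~\ref{GaussDie1}.

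The case $d=4$ is immediate. Since $4,t$ leaves every entry equal to $3$ in place, $\S$ has a preimage in $\mathcal{A}$ only when $\S^3_3=(c-1)e-2$ and a preimage in $\mathcal{B}$ only when $\S^3_3=(c-1)e-1$; for such $\S$ the preimages are parametrised by the triple $(g_1,g_2,g_3)$ of numbers of $5$s changed in each row, $g_1+g_2+g_3=t$. Summing the coefficient of Lemma~\ref{AllTogetherNow}(1) over this triple and applying Lemma~\ref{ThreeProd} collapses it to $\gauss{\S^4_1+\S^4_2+\S^4_3}{t}=\gauss{e-1+t}{t}$, which is $0$ by Lemma~\ref{GaussDie}; thus $\sum_{\A\in\mathcal{A}}\Theta_\A(m_\mu h_{4,t})$ and $\sum_{\B\in\mathcal{B}}\Theta_\B(m_\mu h_{4,t})$ each vanish. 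The case $d=1$ is treated as in the $h_{1,t}$ part of the preceding Proposition: the move $1,t$ moves nothing into or out of row $3$ and does not change the total number of $3$s in rows $1,2$, so the semistandard tableaux reachable from $\mathcal{A}$ and those reachable from $\mathcal{B}$ have different values of $\S^3_3$ and hence form disjoint sets; it therefore suffices to prove $\sum_{\A\in\mathcal{A}}\Theta_\A(m_\mu h_{1,t})=0$ and $\sum_{\B\in\mathcal{B}}\Theta_\B(m_\mu h_{1,t})=0$ separately, and in each one sums over the free distributions of $3$s, $4$s and $5$s among rows $1$ and $2$ with Lemma~\ref{ThreeProd} and reaches a coefficient that vanishes by Lemma~\ref{GaussDie} (only $t\le 2e-2$ need be considered, the rest being killed by Lemma~\ref{AllTogetherNow}(4)).

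The case $d=3$ is where the combination $\sum_\mathcal{A}-q\sum_\mathcal{B}$ does real work. Here $\S^3_3$ forces $g_3$, the number of changes $4\to 3$ made in row $3$, to equal $\S^3_3-(c-1)e+2$ for a preimage in $\mathcal{A}$ and $\S^3_3-(c-1)e+1$ for one in $\mathcal{B}$; collapsing the remaining sum over $(g_1,g_2)$ by Lemma~\ref{ThreeProd} gives $\mathcal{A}$-coefficient $\gauss{\S^3_3}{g_3}\,q^{((c-1)e-2)(t-g_3)}\gauss{e-1+(t-g_3)}{t-g_3}$, which by Lemma~\ref{GaussDie} is nonzero only for $g_3=t$, i.e.\ $\S^3_3=(c-1)e-2+t$, and $\mathcal{B}$-coefficient $\gauss{\S^3_3}{g_3}\,q^{((c-1)e-1)(t-g_3)}\gauss{e-2+(t-g_3)}{t-g_3}$, which by Lemma~\ref{GaussDie1} is nonzero only for $t-g_3\in\{0,1\}$. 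Hence $c_\S=0$ unless $\S^3_3\in\{(c-1)e-1+t,\ (c-1)e-2+t\}$. If $\S^3_3=(c-1)e-1+t$ only $\mathcal{B}$ contributes and $c_\S=-q\gauss{(c-1)e-1+t}{t}=0$ by Lemma~\ref{GaussDie}; if $\S^3_3=(c-1)e-2+t$ then $c_\S=\gauss{(c-1)e-2+t}{t}-q^{(c-1)e}[e-1]\gauss{(c-1)e-2+t}{t-1}$, which vanishes: for $t\ge 2$ both Gaussian polynomials are $0$ by Lemma~\ref{GaussDie1}, and for $t=1$ the expression is $[(c-1)e-1]-q^{(c-1)e}[e-1]$, which equals $0$ because $q^e=1$ gives $q^{(c-1)e}=1$ and $[(c-1)e-1]=-q^{(c-1)e-1}=-q^{-1}$, whence $[(c-1)e-1]-[e-1]=-q^{-1}\bigl(1+q[e-1]\bigr)=-q^{-1}[e]=0$. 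The case $d=2$ follows the same template using Lemma~\ref{AllTogetherNow}(3) (and hence Lemma~\ref{Lemma7}): the move leaves row $1$ fixed, so $\S^i_1=\A^i_1$; one sums over the free distributions of $3$s, $4$s and $5$s among rows $1$ and $2$ with Lemma~\ref{ThreeProd}, uses Lemma~\ref{GaussDie} and Lemma~\ref{GaussDie1} to isolate the few $\S$ on which the $\mathcal{A}$- and $\mathcal{B}$-contributions survive, and checks that after the $\mathcal{B}$-part is multiplied by $-q$ the two cancel; only $1\le t\le e-1$ need be treated, larger $t$ being killed by Lemma~\ref{AllTogetherNow}(3).

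The main obstacle is the case $d=2$ (and, to a lesser degree, $d=1$): whereas $h_{3,t}$ and $h_{4,t}$ act by a plain relabelling, $h_{2,t}$ carries the genuine choice of Lemma~\ref{Lemma7}, so a single tableau feeds many targets with coefficients that are products of three Gaussian polynomials with interlocked powers of $q$. Three tasks must be carried out together: the internal sums over the free parameters of $\mathcal{A}$ and of $\mathcal{B}$; the exact determination of which targets escape the Lemma~\ref{GaussDie}/\ref{GaussDie1} vanishing; and the pairing of surviving $\mathcal{A}$-targets with surviving $\mathcal{B}$-targets so that the factor $-q$ produces cancellation, where aligning the $q$-exponents is the delicate point. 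Everything beyond that is bookkeeping of the kind already exhibited in the proof of Lemma~\ref{AllTogetherNow}.
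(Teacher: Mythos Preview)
Your approach is essentially the paper's: apply Lemma~\ref{AllTogetherNow} for each $d$, collapse the free parameters via Lemma~\ref{ThreeProd}, and kill the resulting Gaussian coefficients with Lemmas~\ref{GaussDie}/\ref{GaussDie1}; for $d=4$ and $d=1$ the $\mathcal{A}$- and $\mathcal{B}$-sums vanish separately, while for $d=3$ and $d=2$ the $-q$ combination is needed.  Your $d=3$ computation is (up to the choice of parametrising by $g_3$ rather than by $\S^3_3$) the paper's, and your treatment of $d=4$ and $d=1$ matches the paper's.

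One slip to correct in the $d=2$ sketch: you write that one ``sums over the free distributions of $3$s, $4$s and $5$s among rows $1$ and $2$'', but since row~$1$ is fixed and $\A^3_3$ is fixed, the only free parameters in a preimage $\A\in\mathcal{A}$ (resp.\ $\mathcal{B}$) of a given $\S$ are $\A^4_3,\A^5_3$ with $\A^4_3+\A^5_3=e-1$ (resp.\ $e-2$).  Summing these with Lemma~\ref{ThreeProd} gives the factors $\gauss{\S^4_3+\S^5_3}{e-1}$ and $\gauss{\S^4_3+\S^5_3}{e-2}$, and the surviving cases are $\S^4_3+\S^5_3\in\{e-2,e-1\}$; for $e-2$ only $\mathcal{B}$ contributes and the remaining Gaussian $\gauss{(b-c)e-1+t}{t}$ vanishes, while for $e-1$ the required identity is $\gauss{(b-c)e+t}{t}-q^{(b-c)e}\gauss{(b-c)e+t-1}{t-1}=\gauss{(b-c)e+t-1}{t}=0$ from Lemma~\ref{GaussSum} and Lemma~\ref{GaussDie}.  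This is exactly the paper's computation, so once the slip is fixed your outline is complete.
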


\begin{proof} Note that all tableaux $\A \in  \mathcal{A} \cup \mathcal{B}$ have the form described in Lemma~\ref{AllTogetherNow} and use the notation of that lemma.  For $1 \leq d \leq 4$ and $1 \leq t \leq \mu_{d+1}$, let 
\[\mathcal{D}(d,t)=\{\S \in \mathcal{T}_{0}(\la,\nu) \mid \A \xrightarrow{d,t} \S \text{ for some } \A \in \mathcal{A} \cup \mathcal{B}\}.\] 
For $\S \in \mathcal{D}(d,t)$ define $b_{\mathcal{A}}(\S)$ to be the coefficient of $\Theta_\S(m_\nu)$ in $\sum_{\A \in \mathcal{A}} \Theta_\A(m_\mu h_{d,t})$, define $b_{\mathcal{B}}(\S)$ to be its coefficient in $\sum_{\B \in \mathcal{A}} \Theta_\A(m_\mu h_{d,t})$ and set $b(\S) =  b_{\mathcal{A}}(\S) -q b_{\mathcal{B}}(\S)$ to be its coefficient in $\Phi(m_\mu h_{d,t})$.
 
Take $d=4$ and $1 \leq t \leq e-1$ and suppose that $\S \in \mathcal{D}(4,t)$.  Using Lemma~\ref{AllTogetherNow} and applying Lemma~\ref{ThreeProd} and Lemma~\ref{GaussDie} we have
\begin{align*}
 b_\mathcal{A}(\S)  & = \sum_{\stackrel{\A \in \mathcal{A}}{\A \xrightarrow{d,t} \S}} q^{(\A^4_3+\A^4_2)(\S^4_1-\A^4_1)} \gauss{\S^4_1}{\A^4_1} q^{\A^4_3(\S^4_2-\A^4_2)}\gauss{\S^4_2}{\A^4_2}\gauss{\S^4_3}{\A^4_3} \\
& = \sum_{\A^4_1+\A^4_2+\A^4_3=e-1} q^{(\A^4_3+\A^4_2)(\S^4_1-\A^4_1)} \gauss{\S^4_1}{\A^4_1} q^{\A^4_3(\S^4_2-\A^4_2)}\gauss{\S^4_2}{\A^4_2}\gauss{\S^4_3}{\A^4_3} \\
& = \gauss{\S^4_1+\S^4_2+\S^4_3}{\A^4_1+\A^4_2+\A^4_3}  \\& = \gauss{e-1+t}{t}\\&=0.
\end{align*}
An identical argument shows that $ b_\mathcal{B}(\S)$ is also zero.

Now take $d=3$ and $1 \leq t \leq e-1$.  Suppose that $\S \in \mathcal{D}(d,t)$.  Then
\begin{align*}
b(\S) & = \sum_{\stackrel{\A \in \mathcal{A}}{\A \xrightarrow{d,t} \S}} q^{(\A^3_2+\A^3_3)(\S^3_1-\A^3_1)} q^{A^3_3(\S^3_2-A^3_2)} \gauss{\S^3_1}{\A^3_1} \gauss{\S^3_2}{\A^3_2} \gauss{\S^3_3}{\A^3_3} \\
&\hspace*{0.75cm} -q  \sum_{\stackrel{\B \in \mathcal{B}}{\B \xrightarrow{d,t} \S}} q^{(\B^3_2+\B^3_3)(\S^3_1-\B^3_1)} q^{B^3_3(\S^3_2-B^3_2)} \gauss{\S^3_1}{\B^3_1} \gauss{\S^3_2}{\B^3_2} \gauss{\S^3_3}{\B^3_3} \\
&=q^{((c-1)e-2)((c-1)e-2-\S^3_3+t)}\gauss{S^3_3}{(c-1)e-2} \sum_{A^3_1+\A^3_2=e-1}q^{\A^3_2(\S^3_1-\A^3_1)} \gauss{\S^3_2}{\A^3_2} \gauss{\S^3_1}{\A^3_1}  \\ 
&\hspace*{0.75cm} - q^{((c-1)e-1)((c-1)e-1-\S^3_3+t) +1}\gauss{S^3_3}{(c-1)e-1} \sum_{B^3_1+\B^3_2=e-2}q^{\B^3_2(\S^3_1-\B^3_1)} \gauss{\S^3_2}{\B^3_2} \gauss{\S^3_1}{\B^3_1}  \\ 
&=q^{((c-1)e-2)((c-1)e-2-\S^3_3+t)}\gauss{S^3_3}{(c-1)e-2} \gauss{\S^3_2+\S^3_1}{e-1} -  q^{((c-1)e-1)((c-1)e-1-\S^3_3+t) +1}\gauss{S^3_3}{(c-1)e-1}  \gauss{\S^3_2+\S^3_1}{e-2} 
\end{align*}
where, by Lemma~\ref{GaussDie1}, $\gauss{S^3_3}{(c-1)e-2}$ and $\gauss{S^3_3}{(c-1)e-1}$ are both zero unless $\S^3_3=(c-1)e-2$ or $\S^3_3=(c-1)e-1$.  If $\S^3_3 = (c-1)e-2$ then $\S^3_1+S^3_2  = e-1+t$ and
\[b(\S) = q^{((c-1)e-1) t} \gauss{e-1+t}{t} =0\]
by Lemma~\ref{GaussDie}.  If $\S^3_3 =  (c-1)e-1$ then $\S^3_1+S^3_2  = e-2+t$. Note that $[(c-1)e-1] = -q^{(c-1)e-1}$.  Applying Lemma~\ref{GaussSum} and Lemma~\ref{GaussDie} we have
\begin{align*}
b(\S) &= q^{((c-1)e-2)(t-1)} [(c-1)e-1] \gauss{e-2+t}{e-1} - q^{((c-1)e-1)t+1} \gauss{e-2+t}{e-2} \\
&= -q^{((c-1)e-2)t+1} \left( \gauss{e-2+t}{e-1}+q^t \gauss{e-2+t}{e-2}\right) \\
& = -q^{((c-1)e-2)t+1} \gauss{e-1+t}{t} \\
&=0
\end{align*}
as required.  

Now take $d=2$ and $1 \leq t \leq e-1$ and suppose that $\S \in \mathcal{D}(2,t)$.  If $\A \in \mathcal{A}$ note that $\A^3_3 = (c-1)e-2$.  Then
\begin{align*}
b_{\mathcal{A}}(\S) & = \sum_{\A \xrightarrow{2,t} \S} (-1)^{\A^3_3-\S^3_3} q^{\binom{\A^3_3-\S^3_3}{2} + \S^3_3 t} \gauss{(b-1)e-2+t-\A^3_3}{(b-1)e-2-\S^3_3} q^{\A^4_3(\S^5_3-\A^5_4)} \gauss{\S^4_3}{\A^4_3} \gauss{\S^5_3}{\A^5_3}  \\
 &=  (-1)^{(c-1)e-2-\S^3_3} q^{\binom{(c-1)e-2-\S^3_3}{2} + \S^3_3 t} \gauss{(b-c)e+t}{(b-1)e-2-\S^3_3} \sum_{\A^4_3+\A^5_3=e-1}  q^{\A^4_3(\S^5_3-\A^5_4)} \gauss{\S^4_3}{\A^4_3} \gauss{\S^5_3}{\A^5_3} \\
&=  (-1)^{(c-1)e-2-\S^3_3} q^{\binom{(c-1)e-2-\S^3_3}{2} + \S^3_3 t} \gauss{(b-c)e+t}{(b-1)e-2-\S^3_3} \gauss{S^4_3+\S^5_3}{e-1}
\end{align*}
and the same argument shows that 
\[b_{\mathcal{B}}(\S) = (-1)^{(c-1)e-1-\S^3_3} q^{\binom{(c-1)e-1-\S^3_3}{2} + \S^3_3 t} \gauss{(b-c)e-1+t}{(b-1)e-2-\S^3_3} \gauss{S^4_3+\S^5_3}{e-2}.\]
Note that if $\A \xrightarrow{2,t} \S$ for some $\A \in \mathcal{A}$ then $e-1 \leq \S^4_3 + \S^5_3 \leq 2e-2$ and if  $\B \xrightarrow{2,t} \S$ for some $\B \in \mathcal{B}$ then $e-2 \leq \S^4_3 + \S^5_3 \leq 2e-3$.  So by Lemma~\ref{GaussDie}, $b(\S)=0$ unless $\S^4_3 + \S^5_3 = e-2$ or  $\S^4_3 + \S^5_3 = e-1$.  If  $\S^4_3 + \S^5_3 = e-2$  then
\[b(\S) = (-q) q^{\S^3_3 t} \gauss{(b-c)e-1+t}{t}=0\]
by Lemma~\ref{GaussDie}.  If $\S^4_3+\S^5_3 =e-1$ then recall that $[e-1] = -q^{e-1} = - q^{(b-c)e  -1}$.  Then
\begin{align*}
b(\S) & = q^{\S^3_3 t} \gauss{(b-c)e+t}{t} + (q) q^{\S^3_3 t} \gauss{(b-c)e-1+t}{t-1}[e-1]\\ 
&= q^{\S^3_3 t} \left( \gauss{(b-c)e+t}{t} - q^{(b-c)e} \gauss{(b-1)e+t-1}{t-1} \right)\\
&= q^{\S^3_3 t} \gauss{(b-c)e+t-1}{t} \\
&=0 \end{align*}
by Lemma~\ref{GaussSum} and Lemma~\ref{GaussDie}.  

Finally take $d=1$ and $1 \leq t \leq 2e-2$ and suppose that $\S \in \mathcal{D}(1,t)$.
By Lemma~\ref{AllTogetherNow}  
\begin{align*}
b_{\mathcal{A}}(\S) & = \sum_{\A \xrightarrow{2,t} \S} (-1)^{\A^2_2-\S^2_2} q^{\binom{\A^2_2-\S^2_2}{2}+\S^2_2 t}\gauss{(a-b+1)e-1+t}{ae-3-\S^2_2} q^{\A^3_2(\S^4_2-\A^4_2)}q^{(\A^3_2+\A^4_2)(\S^5_2-\A^5_2)} \gauss{\S^3_2}{\A^3_2}\gauss{\S^4_2}{\A^4_2}\gauss{\S^5_2}{\A^5_2} \\
&= (-1)^{\A^2_2-\S^2_2} q^{\binom{\A^2_2-\S^2_2}{2}+\S^2_2 t}\gauss{(a-b+1)e-1+t}{ae-3-\S^2_2} \sum_{\A^3_2+\A^4_2+\A^5_2=e-1} q^{\A^3_2(\S^4_2-\A^4_2)}q^{(\A^3_2+\A^4_2)(\S^5_2-\A^5_2)} \gauss{\S^3_2}{\A^3_2}\gauss{\S^4_2}{\A^4_2}\gauss{\S^5_2}{\A^5_2} \\
&= (-1)^{\A^2_2-\S^2_2} q^{\binom{\A^2_2-\S^2_2}{2}+\S^2_2 t}\gauss{(a-b+1)e-1+t}{ae-3-\S^2_2} \gauss{\S^3_2+\S^4_2+\S^5_2}{e-1}.
\end{align*}
Since $e-1 \leq \S^3_2+\S^4_2+\S^5_2 \leq 2e-2$, Lemma~\ref{GaussDie} shows that the last term is zero unless  $\S^3_2+\S^4_2+\S^5_2 = e-1$.  In this case $1 \leq t \leq e-1$ and $\S^2_2 = (b-1)e-2$ and so $b_{\mathcal{A}}(\S)$ has a factor
\[\gauss{(a-b+1)e-1+t}{t} = 0\]
by Lemma~\ref{GaussDie}.  An identical argument shows that $b_{\mathcal{B}}(\S)=0$.

This completes the proof that $\Phi(m_\mu h_{d,t})=0$ for all $1 \leq d \leq 4$ and all $1 \leq t \leq \mu_{d+1}$.      
\end{proof}

\end{document}